\documentclass[pdflatex,sn-mathphys-num]{sn-jnl}

\usepackage{graphicx}%
\usepackage{multirow}%
\usepackage{amsmath,amssymb,amsfonts}%
\usepackage{amsthm}%
\usepackage{mathrsfs}%
\usepackage[title]{appendix}%
\usepackage{xcolor}%
\usepackage{textcomp}%
\usepackage{manyfoot}%
\usepackage{booktabs}%
\usepackage{algorithm}%
\usepackage{algorithmicx}%
\usepackage{algpseudocode}%
\usepackage{listings}%
\theoremstyle{thmstyleone}%
\newtheorem{thm}{Theorem}%
\newtheorem{prp}[thm]{Proposition}%
\newtheorem{crl}[thm]{Corollary}%
\theoremstyle{thmstyletwo}%
\newtheorem{rmk}{Remark}%
\theoremstyle{thmstylethree}%
\newtheorem{dfn}{Definition}%
\newtheorem{asm}[dfn]{Assumption}%

\raggedbottom

\usepackage{subcaption}
\usepackage{here}
\usepackage{physics}
\usepackage{mathtools}
\lstset{
  basicstyle={\ttfamily},
  identifierstyle={\small},
  commentstyle={\smallitshape},
  keywordstyle={\small\bfseries},
  ndkeywordstyle={\small},
  stringstyle={\small\ttfamily},
  frame={tb},
  breaklines=true,
  columns=[l]{fullflexible},
  numbers=left,
  xrightmargin=0em,
  xleftmargin=3em,
  numberstyle={\scriptsize},
  stepnumber=1,
  numbersep=1em,
  lineskip=-0.5ex
}

\newcommand{\e}[1]{\begin{equation}#1\end{equation}}
\newcommand{\ald}[1]{\begin{aligned}#1\end{aligned}}
\newcommand{\ite}[1]{\begin{itemize}#1\end{itemize}}
\newcommand{\prn}[1]{\left(#1\right)}
\newcommand{\ang}[1]{\left\langle#1\right\rangle}
\newcommand{\cur}[1]{\left\{#1\right\}}
\def\coloneq{\mathrel{\mathop:}=}
\def\r{\mathbb{R}}
\def\rn{\mathbb{R}^n}
\allowdisplaybreaks

\begin{document}

\title[Pseudo-concave optimization of the first eigenvalue]{Pseudo-concave optimization of the first eigenvalue of elliptic operators with application to topology optimization by homogenization}

\author*[1]{\fnm{Akatsuki} \sur{Nishioka}}\email{nishioka.a.2122@m.isct.ac.jp}

\affil*[1]{\orgdiv{Department of Mathematical and Computing Science}, \orgname{School of Computing, Institute of Science Tokyo}, \orgaddress{\street{Ookayama 2-12-1}, \city{Meguro-ku}, \postcode{152-8550}, \state{Tokyo}, \country{Japan}}}

\abstract{
We study optimization problems for the first eigenvalue of a linear elliptic operator. As applications, we consider homogenized two-phase optimal design problems, also known as topology optimization problems, for conductivity and simplified elasticity settings. Under suitable assumptions, we prove that the first eigenvalue is pseudo-concave with respect to the density-like parameter. This pseudo-concavity implies that every stationary point of the corresponding maximization problem is a global maximizer. Also, for a certain pseudo-concave minimization problem in the conductivity setting, a classical $0$-$1$ minimizer exists. Finally, we present simple numerical experiments illustrating the theoretical results.
}

\keywords{topology optimization, optimal design, eigenvalue optimization, homogenization method, generalized convexity}

\pacs[MSC Classification]{49M41, 49Q10, 90C26, 35P15}

\maketitle

\section{Introduction}

A two-phase optimal design problem seeks to allocate two materials in a bounded open design domain $\Omega$. If material 2 occupies $\omega$ and material 1 occupies $\Omega\backslash\omega$, then a classical, or $0$-$1$, design is represented by a characteristic function $\chi_\omega\in L^\infty(\Omega;\{0,1\})$. Such unrelaxed problems often fail to admit solutions \cite{allaire02,casado15,casado17,casado22,murat71}. A standard remedy is relaxation by the homogenization method, in which $\chi_\omega$ is replaced by a density function $\theta\in L^\infty(\Omega;[0,1])$, and, when necessary, by additional homogenized tensors \cite{allaire02,casado22book,kohn86,tartar75}. This relaxation ensures the existence of solutions and gives computational tractability. However, relaxed problems are still nonconvex in some cases, and relaxed designs may contain gray-scale (density between $0$ and $1$). In this paper, we study these issues for optimization problems of the first (smallest) eigenvalue.

Homogenization-based topology optimization has a long history, but it has recently regained practical interest because modern manufacturing technologies, including 3D printing, can realize increasingly fine microstructured designs \cite{allaire19,groen20}. This paper also aims to present some classical theoretical results in a unified form that can be useful for applications.

One of the main problems is the following homogenized two-phase conductivity problem \cite{cox96}:
\e{\ald{
\label{p_intro}
& \underset{\theta\in L^\infty(\Omega)}{\max} 
& & \lambda_1(\theta)
=\underset{u\in H_0^1(\Omega)\backslash\{0\}}{\inf}
\frac{\int_{\mathrm{\Omega}} (c_1+(c_2-c_1)\theta(x))|\nabla u(x)|^2 dx}
{\int_{\mathrm{\Omega}}(\rho_1+(\rho_2-\rho_1)\theta(x))u(x)^2 dx} \\
& \mathrm{s.t.}
& & 0\le \theta(x)\le 1\ \ \mathrm{a.e.},\\
&
& & \int_{\mathrm{\Omega}} \theta(x) dx=\gamma,
}}
where $0<\gamma<|\Omega|$, $0<c_1<c_2$, and $0<\rho_1<\rho_2$. See Section \ref{subsec_cond_setting} for details.

Optimal design problems for the first eigenvalue have been widely studied in both mathematics and engineering \cite{allaire01,allaire02,bendsoe04,casado15,casado17,casado22,cox90,cox95,henrot06,jha11,laurain14,matsue15,mazari22}. Many existing works focus on existence, relaxation, and regularity of optimal designs. The focus of this paper is different: we use generalized convexity, more precisely pseudo-concavity, to identify classes of relaxed eigenvalue optimization problems in which every stationary point of a maximization problem is globally optimal.

The main theoretical result is that, under suitable concavity and convexity assumptions on the operators, the first eigenvalue is pseudo-concave with respect to the density-like parameter. The proof is based on the Clarke subdifferential and therefore covers the case where the first eigenvalue is multiple and not differentiable. This extends the differentiable result of \cite{jouron78} and the finite-dimensional symmetric-matrix result of \cite{nishioka23coap}.

We apply this result to homogenized optimal design problems of conductivity and elasticity. In the conductivity setting, we prove that, for several relaxed maximization problems, every stationary point is a global maximizer. We also mention that, for a certain pseudo-concave minimization problem, the existence of a classical $0$-$1$ minimizer is known by a problem-specific argument. We also conduct simple numerical experiments to illustrate these theoretical results and provide the FreeFEM code in Appendix \ref{apn}.

The fully relaxed linear elasticity problem remains more difficult. In that case, the homogenized tensor is generally nonlinear with respect to the density, and the pseudo-concavity result of this paper does not directly apply. The existence of suboptimal local optima in linear elastic optimal design, discussed for example in \cite{allaire02}, is an important theoretical issue and is also relevant in engineering applications, where eigenfrequency maximization is used to design structures with high dynamic stiffness. We regard the extension of the present approach to the genuine linear elasticity problem as a main direction for future work.

This paper is organized as follows. Section \ref{sec_theory} introduces pseudo-concavity and proves the pseudo-concavity of the first eigenvalue in an abstract generalized-eigenvalue setting. Section \ref{sec_app} applies the result to homogenized optimal design problems of conductivity and elasticity. Section \ref{sec_num} presents numerical experiments in the conductivity setting.

\subsection*{Notation}

We use the following notation.
\ite{
\item $|x|$ and $x\cdot y$ denote the Euclidean norm and the Euclidean inner product of vectors $x,y\in\rn$.
\item $A:B\coloneq\tr(A^\top B)$ and $|A|\coloneq\sqrt{\tr(A^\top A)}$ denote the inner product and the norm of matrices $A,B\in\r^{n\times n}$.
\item $\|u\|_V$ and $\ang{u^*,u}_{V^*,V}$ denote the norm and the dual coupling in a function space $V$, where $u\in V$, $u^*\in V^*$, and $V^*$ is the topological dual space of $V$. For a Hilbert space $H$, $\ang{u,v}_H$ denotes the inner product. We often omit the subscript.
\item $\mathrm{conv}\,S$ is the convex hull of a set $S\subseteq X$ where $X$ is a linear space.
\item $\otimes$ denotes the tensor product.
\item $L^\infty(\Omega;\{0,1\})$ and $L^\infty(\Omega;[0,1])$ are the spaces of functions in $L^\infty(\Omega)$ that take values in $\{0,1\}$ and $[0,1]$ almost everywhere, respectively.
\item Two different uses of $\partial$ should not be confused. $\partial\Omega$ is the boundary of a set $\Omega$ and $\partial f(\theta)$ is the Clarke subdifferential of a function $f$ at $\theta$.
\item We often omit the argument $(x)$ of functions such as $\theta$, $\chi_\omega$, and $u$.
}

\section{Pseudo-concavity of the first eigenvalue}\label{sec_theory}

\subsection{Preliminary results on pseudo-concave functions}

We introduce a possibly nonsmooth pseudo-concave function, which uses the Clarke subdifferential in its definition. The Clarke subdifferential is an extension of the ordinary convex subdifferential to a nonconvex function. It is defined through the generalized directional derivative.

\begin{dfn}[generalized directional derivative]
Let $X$ be a Banach space, $D\subseteq X$ be an open set, and $f:D\to\r$ be a locally Lipschitz continuous function. The generalized directional derivative of $f$ at $\theta\in D$ in the direction $d\in X$ is defined by 
\e{
f^\circ(\theta;d)\coloneq\underset{\theta'\to\theta,\ t\downarrow0}{\mathrm{lim\,sup}}\,\frac{f(\theta'+td)-f(\theta')}{t}.
}
\end{dfn}

\begin{dfn}[Clarke subdifferential]
Let $X$ be a Banach space, $D\subseteq X$ be an open set, and $f:D\to\r$ be a locally Lipschitz continuous function. The Clarke subdifferential of $f$ at $\theta\in D$ is the subset of $X^*$ defined by
\e{
\partial f(\theta)\coloneq\cur{g\in X^*\mid f^\circ(\theta;d)\ge\ang{g,d}_{X^*,X},\ \forall d\in X}.
}
Each element of $\partial f(\theta)$ is called a Clarke subgradient.
\end{dfn}

\begin{rmk}
When we consider a concave function $f$ instead of a convex function, we often use the superdifferential (instead of subdifferential) of $f$ at $\theta$ defined by $\partial^+ f(\theta)\coloneq-\partial (-f)(\theta)$ where $\partial (-f)(\theta)$ is the ordinary convex subdifferential for the convex function $-f$. In the case of the Clarke subdifferential, $\partial (sf)(\theta)=s\partial f(\theta)$ holds for any scalar $s$ ($s$ can be negative unlike ordinary convex subdifferentials) \cite[Proposition 2.3.1]{clarke90}, and thus there is no distinction between Clarke subdifferentials and Clarke superdifferentials.
\end{rmk}

We define a pseudo-concave function. An example of a pseudo-concave function that is not concave is a bell curve $f(x)=\exp(-x^2)$.

\begin{dfn}[pseudo-concave function \cite{penot97}]
Let $X$ be a Banach space and $D\subseteq X$ be an open convex set. A locally Lipschitz continuous function $f:D\to\r$ is said to be pseudo-concave in $D$ if, for any $\theta,\theta'\in D$ and for any Clarke subgradient $g\in\partial f(\theta)$, the following implication holds:
\e{
f(\theta)<f(\theta')\ \Rightarrow\ \ang{g,\theta'-\theta}>0.
\label{pseudo}
}
For a closed convex set $S$, we often say that $f$ is pseudo-concave in $S$ or $f:S\to\r$ is pseudo-concave if $f$ admits a locally Lipschitz extension $\widetilde f:D\to\r$ to some open convex set $D\supseteq S$ such that $\widetilde f$ is pseudo-concave in $D$. A function $f$ is said to be pseudo-convex if $-f$ is pseudo-concave.
\end{dfn}

Every concave function is a pseudo-concave function since the inequality of concavity $f(\theta')-f(\theta)\le\langle g,\theta'-\theta\rangle$ ensures the implication \eqref{pseudo}. Every pseudo-concave function is a quasi-concave function.\footnote{A function $f:X\to\r$ is called a quasi-concave function if its superlevel set $\cur{\theta\in X\mid f(\theta)\ge \alpha}$ is convex for any $\alpha\in\r$. This should not be confused with quasi-convexity in the sense of Morrey \cite{morrey52} used to ensure the weak lower-semicontinuity of an integral functional in the calculus of variations.}

We introduce Clarke stationarity, a necessary condition for local optimality \cite[Proposition 2.3.2]{clarke90}. In a pseudo-concave maximization problem (consisting of a pseudo-concave objective function and a closed convex feasible set), every Clarke stationary point is a global maximizer. This is a well-known fact (e.g., \cite{penot97}) and is directly derived from the definitions of a pseudo-convex function and a Clarke stationary point, but we give a proof for self-containment.

\begin{dfn}[Clarke stationary point]
Let $X$ be a Banach space, $D\subseteq X$ be an open set, $S\subset D$ be a nonempty closed convex set, and $f:D\to\r$ be a locally Lipschitz continuous function. A Clarke stationary point of a maximization problem
\e{
\underset{\theta\in S}{\max}\ f(\theta)
}
is a point $\theta^*\in S$ satisfying
\e{
\ang{g,\theta-\theta^*}\le 0
\label{stationarity}
}
for some $g\in\partial f(\theta^*)$ and for any $\theta\in S$.
\end{dfn}

\begin{prp}[property of solutions to pseudo-concave maximization]\label{prp_psemax}
Let $X$ be a Banach space, $D\subseteq X$ be an open convex set, $S\subset D$ be a nonempty closed convex set, and $f:D\to\r$ be a pseudo-concave function. Every Clarke stationary point of a pseudo-concave maximization problem
\e{
\underset{\theta\in S}{\max}\ f(\theta)
}
is a global maximizer.
\end{prp}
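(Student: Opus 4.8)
The plan is to argue by contradiction. Suppose $\theta^*\in S$ is a Clarke stationary point that is not a global maximizer, so there exists $\bar\theta\in S$ with $f(\theta^*)<f(\bar\theta)$. By Clarke stationarity, there is some $g\in\partial f(\theta^*)$ satisfying $\ang{g,\theta-\theta^*}\le 0$ for every $\theta\in S$; in particular $\ang{g,\bar\theta-\theta^*}\le 0$. On the other hand, pseudo-concavity applied at the pair $(\theta^*,\bar\theta)$ with this same subgradient $g\in\partial f(\theta^*)$ gives the implication $f(\theta^*)<f(\bar\theta)\Rightarrow\ang{g,\bar\theta-\theta^*}>0$. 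Since the hypothesis $f(\theta^*)<f(\bar\theta)$ holds, we conclude $\ang{g,\bar\theta-\theta^*}>0$, contradicting $\ang{g,\bar\theta-\theta^*}\le 0$. Hence no such $\bar\theta$ exists and $\theta^*$ is a global maximizer.

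The only mild subtlety is matching the quantifiers correctly: the definition of a Clarke stationary point guarantees the inequality \eqref{stationarity} for \emph{some} $g\in\partial f(\theta^*)$ and \emph{all} $\theta\in S$, while the definition of pseudo-concavity requires the implication \eqref{pseudo} for \emph{all} $g\in\partial f(\theta)$. Since the stationarity subgradient is in particular one such $g$, the pseudo-concavity implication applies to it, and the two conclusions about the sign of $\ang{g,\bar\theta-\theta^*}$ collide. No differentiability, simplicity of the eigenvalue, or smoothness of $f$ is needed — everything is purely at the level of the definitions already given. I would also note that $S$ being convex is used only to ensure $\bar\theta-\theta^*$ is an admissible displacement in the sense that $\theta^*+t(\bar\theta-\theta^*)\in S$, though strictly the argument above only invokes membership $\bar\theta\in S$ and the two definitions verbatim, so convexity is not even explicitly needed at this step (it is of course needed for the notion of pseudo-concavity to make sense on $S$).

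I expect no real obstacle here; the statement is essentially an unwinding of definitions, and the ``main step'' is simply recognizing that the same subgradient $g$ appears in both the stationarity condition and the pseudo-concavity implication. The proof is two or three lines.
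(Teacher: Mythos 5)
Your proposal is correct and is essentially identical to the paper's own proof: both argue by contradiction, applying the pseudo-concavity implication \eqref{pseudo} with $\theta=\theta^*$ and $\theta'=\bar\theta$ to the stationarity subgradient $g$ and colliding the resulting strict inequality with \eqref{stationarity}. Your remark on the quantifier matching (stationarity holds for \emph{some} $g$, pseudo-concavity for \emph{all} $g$, so the implication applies to that particular $g$) is exactly the right observation and is implicit in the paper's argument.
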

\begin{proof}
Let $\theta^*$ be a Clarke stationary point. Suppose there exists $\hat{\theta}\in S$ such that $f(\theta^*)<f(\hat{\theta})$. By substituting $\theta^*$ to $\theta$ and $\hat{\theta}$ to $\theta'$ in \eqref{pseudo}, we obtain $\langle g,\hat{\theta}-\theta^*\rangle>0$ for any $g\in\partial f(\theta^*)$, which contradicts the Clarke stationarity \eqref{stationarity} of $\theta^*$.
\end{proof}

\begin{rmk}\label{rmk_no_general_extreme}
In contrast to finite-dimensional compact settings, pseudo-concavity alone does not imply that a minimizer over a closed convex subset of an infinite-dimensional Banach space can be chosen as an extreme point. The existence of a classical $0$-$1$ minimizer, which belongs to the set of extreme points of the feasible set, in Proposition \ref{crl_den_rel} is proved by a problem-specific argument.
\end{rmk}

\subsection{Main theoretical results}

We show the pseudo-concavity of the first eigenvalue of linear elliptic operators in the abstract setting of \cite[Section 3]{cox95}. See also \cite[Section 8.6]{attouch14} for an abstract theory of eigenvalues of linear elliptic operators. We consider the following eigenvalue problem
\e{
A(\theta)u=\lambda B(\theta)u,
\label{eig_gen_0}
}
where the first eigenvalue can be written using the Rayleigh quotient
\e{
\lambda_1(\theta)=\underset{u\in V\backslash\{0\}}{\inf}\,\frac{\ang{A(\theta)u,u}_H}{\ang{B(\theta)u,u}_H}.
\label{eig_gen}
}
Throughout this paper, expressions such as $\ang{A(\theta)u,u}_H$ are understood in the variational sense through the corresponding symmetric bilinear forms of an elliptic differential operator. Problem \eqref{eig_gen_0} is often called a generalized eigenvalue problem since an operator appears in both the left- and right-hand side. Optimization problems of generalized eigenvalues of symmetric matrices are studied in \cite{achtziger07siam,nishioka23coap,nishioka24svva}.

Assumptions on the operators $A,B$ are as follows.

\begin{asm}\label{asm1}
Let $V,H$ be Hilbert spaces such that $V$ is densely and compactly embedded in $H$ (a typical example is $V=H^1_0(\Omega)$ and $H=L^2(\Omega)$). Let $X$ be a Banach space. Let $D\subseteq X$ be an open convex set and let $S\subseteq D$ be a nonempty convex set. A space of bounded linear self-adjoint operators from $V$ (resp.~$H$) to itself is denoted by $\mathscr{B}_s(V,V)$ (resp.~$\mathscr{B}_s(H,H)$). Let $A:D\to\mathscr{B}_s(V,V)$ and $B:D\to\mathscr{B}_s(H,H)$ be maps. Operators $A(\theta)$ and $B(\theta)$, $\theta\in D$, satisfy the following properties:
\ite{
\item Ellipticity and boundedness: $\underline{a} \|u\|_V^2\le \ang{A(\theta)u,u}_H\le \overline{a}\|u\|_V^2$ and\\
$\underline{b} \|u\|_H^2\le \ang{B(\theta)u,u}_H\le \overline{b}\|u\|_H^2$ for any $u\in V$ and $\theta\in D$, where $0<\underline{a}<\overline{a}$ and $0<\underline{b}<\overline{b}$ are constants.
\item Concavity and convexity: $\theta\mapsto\ang{A(\theta)u,u}_H$ is concave and $\theta\mapsto\ang{B(\theta)u,u}_H$ is convex in $D$ for any $u\in V$.
\item Smoothness: $\theta\mapsto A(\theta),B(\theta)$ are continuously differentiable in $D$.
}
\end{asm}

The eigenvalues form a positive nondecreasing sequence with finite multiplicities that tends to $+\infty$ as $n\to\infty$: $0<\lambda_1\le\lambda_2\le\ldots\le\lambda_n\ldots$ (see \cite[Section 3]{cox95} or \cite[Section 8.6]{attouch14}). The Clarke subdifferential of the first eigenvalue is as follows.

\begin{prp}[Clarke subdifferential of the first eigenvalue (cf.~{\cite[Equation (3.2)]{cox95}})]\label{prp_cla}
Under Assumption \ref{asm1}, the first eigenvalue defined by \eqref{eig_gen} is locally Lipschitz continuous. Define $\mathcal{S}_\theta\coloneq\cur{u\in\mathscr{E}_1(\theta)\mid \ang{B(\theta)u,u}_H=1}$, where $\mathscr{E}_1(\theta)\subseteq V$ is the eigenspace of $\lambda_1(\theta)$. For $u\in\mathcal{S}_\theta$, define $\Phi_{\theta,u}\in X^*$ by
\e{
\ang{\Phi_{\theta,u},d}_{X^*,X}
\coloneq
\ang{\prn{\mathrm{D}A(\theta)[d]-\lambda_1(\theta)\mathrm{D}B(\theta)[d]}u,u}_H,
\quad d\in X.
}
Then the Clarke subdifferential at $\theta\in D$ is
\e{
\partial\lambda_1(\theta)=\mathrm{conv}\cur{\Phi_{\theta,u}\mid u\in\mathcal{S}_\theta},
}
where $\mathrm{D}A(\theta):X\to \mathscr{B}_s(V,V)$ and $\mathrm{D}B(\theta):X\to \mathscr{B}_s(H,H)$ are the Fr\'{e}chet derivatives of $A,B$ at $\theta$.
\end{prp}

\begin{rmk}
In \cite[Equation (3.2)]{cox95}, the first eigenspace is intersected with the
$H$-unit sphere. However, if the $H$-unit sphere is used, the corresponding formula should contain the factor
$\ang{B(\theta)u,u}_H$ in the denominator. In Proposition \ref{prp_cla}, we instead
use the $B(\theta)$-normalized unit sphere in order to absorb this denominator and
simplify the notation.
\end{rmk}

We show the pseudo-concavity by using the Clarke subdifferential.

\begin{thm}[pseudo-concavity of the first eigenvalue]\label{thm_main}
Under Assumption \ref{asm1}, the first eigenvalue $\theta\mapsto\lambda_1(\theta)$ defined by \eqref{eig_gen} is pseudo-concave in $S$.
\end{thm}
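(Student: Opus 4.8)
The plan is to verify the defining implication \eqref{pseudo} directly, using the explicit description of the Clarke subdifferential from Proposition \ref{prp_cla}. First I would reduce to the ``extreme'' subgradients: every $g\in\partial\lambda_1(\theta)$ is a finite convex combination $g=\sum_i\alpha_i g_{u_i}$ of functionals of the form $g_u\colon d\mapsto\ang{\ang{\mathrm{D}A(\theta)-\lambda_1(\theta)\mathrm{D}B(\theta),d}_{X^*,X}u,u}_H$, where $u$ is a first eigenfunction normalized by $\|u\|_H=1$. Since I will obtain a lower bound on $\ang{g_u,\theta'-\theta}$ that does not depend on the particular eigenfunction, that bound survives passing to convex combinations ($\sum_i\alpha_i=1$), so it suffices to treat a single $g_u$. (If pseudo-concavity is to be established on an open convex $D\supseteq S$ as the definition requires, the same computation applies verbatim on $D$, using that Assumption \ref{asm1} extends to such a neighborhood by the smoothness hypothesis.)

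Fix $\theta,\theta'\in S$ with $\lambda_1(\theta)<\lambda_1(\theta')$ and a unit-norm first eigenfunction $u$ of $\lambda_1(\theta)$; since $u$ attains the infimum in \eqref{eig_gen} we have the identity $\ang{A(\theta)u,u}_H=\lambda_1(\theta)\ang{B(\theta)u,u}_H$. By linearity, $\ang{g_u,\theta'-\theta}=\ang{\ang{\mathrm{D}A(\theta),\theta'-\theta}u,u}_H-\lambda_1(\theta)\ang{\ang{\mathrm{D}B(\theta),\theta'-\theta}u,u}_H$. The key step is to replace both directional derivatives by finite differences using the concavity/convexity in Assumption \ref{asm1}: since $\theta\mapsto\ang{A(\theta)u,u}_H$ is differentiable and concave, $\ang{\ang{\mathrm{D}A(\theta),\theta'-\theta}u,u}_H\ge\ang{A(\theta')u,u}_H-\ang{A(\theta)u,u}_H$; since $\theta\mapsto\ang{B(\theta)u,u}_H$ is differentiable and convex and $\lambda_1(\theta)>0$ (ellipticity), $-\lambda_1(\theta)\ang{\ang{\mathrm{D}B(\theta),\theta'-\theta}u,u}_H\ge-\lambda_1(\theta)\prn{\ang{B(\theta')u,u}_H-\ang{B(\theta)u,u}_H}$. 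Adding these and cancelling the $\theta$-terms via the eigenvalue identity leaves $\ang{g_u,\theta'-\theta}\ge\ang{A(\theta')u,u}_H-\lambda_1(\theta)\ang{B(\theta')u,u}_H$.

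To finish I would feed in the variational characterization \eqref{eig_gen} at $\theta'$: $\ang{A(\theta')u,u}_H\ge\lambda_1(\theta')\ang{B(\theta')u,u}_H$ for every $u\neq 0$. Hence $\ang{g_u,\theta'-\theta}\ge\prn{\lambda_1(\theta')-\lambda_1(\theta)}\ang{B(\theta')u,u}_H\ge\prn{\lambda_1(\theta')-\lambda_1(\theta)}\,\underline{b}\,\|u\|_H^2=\prn{\lambda_1(\theta')-\lambda_1(\theta)}\,\underline{b}>0$, using the ellipticity lower bound $\ang{B(\theta')u,u}_H\ge\underline{b}\|u\|_H^2$ and $\|u\|_H=1$. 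This positive quantity is independent of $u$, so by the reduction in the first paragraph $\ang{g,\theta'-\theta}\ge\prn{\lambda_1(\theta')-\lambda_1(\theta)}\,\underline{b}>0$ for every $g\in\partial\lambda_1(\theta)$, which is exactly \eqref{pseudo}.

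I do not expect a genuine obstacle: the argument is a short chain combining the first-order concave/convex inequalities with the Rayleigh-quotient lower bound. The points needing care are (i) the passage from an arbitrary Clarke subgradient to the eigenfunction generators, which works only because the lower bound $\prn{\lambda_1(\theta')-\lambda_1(\theta)}\underline{b}$ is uniform in the eigenfunction; (ii) keeping the normalization $\|u\|_H=1$ consistent with Proposition \ref{prp_cla} and with the ellipticity bound on $B$; and (iii) invoking the first-order inequalities for concave/convex functions on a convex set containing both $\theta$ and $\theta'$ (namely $S$, or the open convex $D$). The sign condition $\lambda_1>0$, itself a consequence of ellipticity, is what preserves the direction of the inequality when the $B$-term is multiplied by $-\lambda_1(\theta)$.
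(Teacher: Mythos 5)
Your proposal is correct and follows essentially the same route as the paper's proof: decompose the Clarke subgradient via Proposition \ref{prp_cla}, replace the directional derivatives by finite differences using the concavity of $\theta\mapsto\ang{A(\theta)u,u}_H$ and convexity of $\theta\mapsto\ang{B(\theta)u,u}_H$, cancel the $\theta$-terms with the eigenvalue identity, and conclude positivity from the Rayleigh-quotient characterization at $\theta'$. The only cosmetic difference is that you extract the uniform quantitative lower bound $(\lambda_1(\theta')-\lambda_1(\theta))\underline{b}$ to justify passing through the convex combination, whereas the paper keeps the convex combination throughout and concludes strict positivity termwise.
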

\begin{proof}
Let $\theta,\theta'\in S$ satisfy
\e{
\lambda_1(\theta)<\lambda_1(\theta').
}
We prove that
\e{
\ang{g,\theta'-\theta}_{X^*,X}>0
}
for every $g\in\partial\lambda_1(\theta)$.

By Proposition \ref{prp_cla}, for each $g\in\partial\lambda_1(\theta)$, there exist $M\in\mathbb{N}$, coefficients $c_j\ge0$ satisfying $\sum_{j=1}^M c_j=1$, and functions $u_j\in\mathcal{S}_\theta$ such that
\e{
g=\sum_{j=1}^M c_j\Phi_{\theta,u_j}.
}
Fix $j\in\cur{1,\ldots,M}$. Since $\theta\mapsto\ang{A(\theta)u_j,u_j}_H$ is concave, we have
\e{
\ang{\mathrm{D}A(\theta)[\theta'-\theta]u_j,u_j}_H
\ge
\ang{A(\theta')u_j,u_j}_H-
\ang{A(\theta)u_j,u_j}_H.
}
Since $\theta\mapsto\ang{B(\theta)u_j,u_j}_H$ is convex and $\lambda_1(\theta)>0$, we also have
\e{
-\lambda_1(\theta)\ang{\mathrm{D}B(\theta)[\theta'-\theta]u_j,u_j}_H
\ge
-\lambda_1(\theta)
\prn{
\ang{B(\theta')u_j,u_j}_H-
\ang{B(\theta)u_j,u_j}_H
}.
}
Combining these inequalities gives
\e{\ald{
\ang{\Phi_{\theta,u_j},\theta'-\theta}_{X^*,X}
&\ge
\ang{A(\theta')u_j,u_j}_H
-\lambda_1(\theta)\ang{B(\theta')u_j,u_j}_H \\
&\quad-
\prn{
\ang{A(\theta)u_j,u_j}_H
-\lambda_1(\theta)\ang{B(\theta)u_j,u_j}_H
}\\
&=\ang{A(\theta')u_j,u_j}_H
-\lambda_1(\theta)\ang{B(\theta')u_j,u_j}_H,
}}
since $u_j\in\mathscr{E}_1(\theta)$, and we have
\e{
\ang{A(\theta)u_j,u_j}_H
=
\lambda_1(\theta)\ang{B(\theta)u_j,u_j}_H.
}

By the Rayleigh quotient characterization of $\lambda_1(\theta')$,
\e{
\lambda_1(\theta')
\le
\frac{
\ang{A(\theta')u_j,u_j}_H
}{
\ang{B(\theta')u_j,u_j}_H
}.
}
Therefore,
\e{\ald{
\ang{\Phi_{\theta,u_j},\theta'-\theta}_{X^*,X}
&\ge
\prn{\lambda_1(\theta')-\lambda_1(\theta)}
\ang{B(\theta')u_j,u_j}_H\\
&>0,
}}
where the last inequality follows from $\lambda_1(\theta')>\lambda_1(\theta)$, $u_j\ne0$, and the positive definiteness of $B(\theta')$.

Consequently,
\e{
\ang{g,\theta'-\theta}_{X^*,X}
=
\sum_{j=1}^M c_j
\ang{\Phi_{\theta,u_j},\theta'-\theta}_{X^*,X}
>0.
}
This proves the implication \eqref{pseudo}, and hence $\theta\mapsto\lambda_1(\theta)$ is pseudo-concave in $S$.
\end{proof}



\section{Applications: optimal design by homogenization}\label{sec_app}

\subsection{Basics of optimal design by homogenization}\label{sec_hom}

In this subsection, we briefly explain the relaxation of optimal design (topology optimization) problems by the homogenization method. See \cite{allaire02,allaire19,casado22book,kohn86} for the details.

We find an optimal design of two-phase material on a regular bounded domain $\Omega\subset\r^N$. The optimization variable (design variable) $\chi_\omega\in L^\infty(\Omega;\{0,1\})$ is a characteristic function of the subset $\omega\subseteq\Omega$ occupied by the material 2 with material constants (e.g.~$c_2,\rho_2$). Ideally, in topology optimization, material 1 occupying $\Omega\backslash\omega$ should be degenerate ($c_1=\rho_1=0$) and $\Omega\backslash\omega$ corresponds to holes. However, when material 1 is degenerate, the ellipticity in Assumption \ref{asm1} is not satisfied, and the treatment of the boundary condition of $\partial\omega$ becomes complicated (cf.~\cite{allaire02}). Thus, we consider weak material 1 (e.g.~$c_1,\rho_1>0$ with $c_1<c_2$ and $\rho_1<\rho_2$) instead of holes.

The admissible set of optimization variables of the original (unrelaxed) problem is 
\e{
\mathrm{ad}_\gamma\coloneq\cur{\chi_\omega\in L^\infty(\Omega)\ \middle|\ \chi_\omega=0\ \mathrm{or}\ 1\ \mathrm{a.e.},\ \int_{\mathrm{\Omega}} \chi_\omega dx=\gamma},
\label{ad_cla}
}
where $0<\gamma<|\Omega|$ is the designated volume of $\omega$. We say a solution (design) is classical or 0-1 if it belongs to $\mathrm{ad}_\gamma$. In the problem relaxed by the homogenization method, the admissible set \eqref{ad_cla} is replaced by its weak$^*$ closure in $L^\infty(\Omega)$, which is
\e{
\mathrm{ad}^*_\gamma\coloneq\cur{\theta\in L^\infty(\Omega)\ \middle|\ 0\le\theta\le1\ \mathrm{a.e.},\ \int_{\mathrm{\Omega}} \theta dx=\gamma}.
\label{ad_rel}
}
Namely, the optimization variable of the relaxed problem is the density function $\theta\in L^\infty(\Omega)$, which takes continuous values in $[0,1]$.

However, the relaxation by the homogenization method is not only replacing the characteristic function $\chi_\omega$ by the density function $\theta$, because the microstructure also affects the material properties. We explain the details with two different settings of optimal design problems: conductivity and elasticity. In the conductivity setting, the eigenvector $u$ is a scalar-valued function (often used to represent temperature or 1-dimensional displacement), and the first eigenvalue is simple if $\Omega$ is connected. In the elasticity setting, the eigenvector $u$ is a vector-valued function (often used to represent 2- or 3-dimensional displacement) and the first eigenvalue is not necessarily simple (not necessarily differentiable) \cite[Section 7.3.3]{allaire07}.

\subsection{Problem setting in conductivity}\label{subsec_cond_setting}

We assume that $\Omega\subset\r^N$ is a regular bounded open set. We consider the following eigenvalue problem of finding eigenvalues $\lambda\in\r$ and eigenvectors $u\in H_0^1(\Omega)$ satisfying (in the weak sense)
\e{
\begin{cases}
-\nabla\cdot c^+(\chi_\omega)\nabla u =\lambda\rho(\chi_\omega)u, & \text{in}\ \Omega\\
u=0 & \text{on}\ \partial \Omega
\end{cases}
\label{eigen}
}
where
\e{\ald{
& c^+(\chi_\omega)= c_1+(c_2-c_1)\chi_\omega,\\
& \rho(\chi_\omega)= \rho_1+(\rho_2-\rho_1)\chi_\omega,
\label{cond_dens}
}}
with the conductivity and density constants
\e{\ald{
& 0<c_1<c_2,\\
& 0<\rho_1<\rho_2.
}}
The equation \eqref{eigen} models heat conduction or vibration of a membrane \cite{casado22,cox90,cox96}. By the ellipticity, we have countably many eigenvalues satisfying $0<\lambda_1\le\lambda_2\le\ldots$ (see~\cite{attouch14,evans10,henrot06}). We aim to optimize the material layout $\chi_\omega$ to maximize the first eigenvalue satisfying \eqref{eigen}. By using the Rayleigh quotient, this problem can be written as
\e{\label{p_original}
\underset{\chi_\omega\in\mathrm{ad}_\gamma}{\sup}\ \ \underset{u\in H_0^1(\Omega)\backslash\{0\}}{\inf}\frac{\int_{\mathrm{\Omega}} c^+(\chi_\omega)|\nabla u|^2 dx}{\int_{\mathrm{\Omega}}\rho(\chi_\omega)u^2 dx},
}
which does not necessarily admit a solution \cite{allaire02,casado15,casado17,casado22,mazari22,murat71}.

We relax Problem \eqref{p_original} by the homogenization method to ensure the existence of solutions and make the maximum of the relaxed problem equal to the supremum of the original problem \eqref{p_original}. We replace the admissible set \eqref{ad_cla} by its weak$^*$ closure \eqref{ad_rel} in $L^\infty(\Omega)$. The integrand $\rho(\chi_\omega)u^2$ in the denominator is also replaced by $\rho(\theta)u^2$. The treatment of the integrand $c^+(\chi_\omega)|\nabla u|^2$ in the numerator is more complicated since it involves $\nabla u$, which is affected by not only $\theta$ but also the microstructure of the material distribution. It is replaced by its $H$-limit $A^*\nabla u\cdot\nabla u$ (for details of $H$-convergence, see \cite{allaire02,casado22book}), where $A^*$ is an $N\times N$ symmetric matrix called a homogenized tensor. In the end, the relaxed problem is
\e{\label{p_relaxed_pre}
\underset{\substack{\theta\in\mathrm{ad}^*_\gamma,\\A^*\in G_\theta}}{\max}\ \underset{u\in H_0^1(\Omega)\backslash\{0\}}{\inf}\frac{\int_{\mathrm{\Omega}} A^*\nabla u\cdot\nabla u dx}{\int_{\mathrm{\Omega}}\rho(\theta)u^2 dx},
}
where $G_\theta$ is the set of all homogenized tensors ($H$-limit) called the G-closure \cite[Section 2.2.3]{allaire02}.

We can simplify the relaxed problem \eqref{p_relaxed_pre} by eliminating the variable $A^*$. The simplest case of the Hashin--Shtrikman energy bound \cite[Section 2.2.2]{allaire02} gives
\e{
c^-(\theta)|\nabla u|^2 \le A^*\nabla u\cdot\nabla u \le c^+(\theta)|\nabla u|^2,\ \ \forall A^*\in G_\theta,
\label{HSbound}
}
where
\e{\ald{
& c^-(\theta)=\prn{\frac{1-\theta}{c_1}+\frac{\theta}{c_2}}^{-1}\\
& c^+(\theta)=c_1+(c_2-c_1)\theta
}}
are the harmonic and arithmetic means of $c_1$ and $c_2$. The equalities in \eqref{HSbound} are attained for some $A^*\in G_\theta$. According to \cite{lipton94}, we can interchange maximization w.r.t. $A^*$ and minimization w.r.t. $u$. It results in the pointwise maximization $\max_{A^*\in G_\theta} A^*\nabla u\cdot\nabla u$, which is equal to $c^+(\theta)|\nabla u|^2$ by \eqref{HSbound}. Therefore, the relaxed problem \eqref{p_relaxed_pre} is equivalent to
\e{\label{p_relaxed}
\underset{\theta\in\mathrm{ad}^*_\gamma}{\max}\ \ \underset{u\in H_0^1(\Omega)\backslash\{0\}}{\inf}\frac{\int_{\mathrm{\Omega}} c^+(\theta)|\nabla u|^2 dx}{\int_{\mathrm{\Omega}}\rho(\theta)u^2 dx},
}
which only depends on the variable $\theta$. Note that \eqref{p_relaxed} is the same problem as \eqref{p_intro}, but with simplified notation.

We obtain the following proposition, which justifies the relaxed problem \eqref{p_relaxed}.

\begin{prp}[{\cite[Proposition 2.3 and Corollary 4.2]{cox96}}]
Problem \eqref{p_relaxed} is the relaxed problem of the original problem \eqref{p_original} in the sense that there exists a solution to \eqref{p_relaxed} and its maximum coincides with the supremum of \eqref{p_original}.
\end{prp}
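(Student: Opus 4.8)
The plan is to establish two things separately: that the supremum in \eqref{p_relaxed} is attained (which justifies writing ``$\max$''), and that this maximum coincides with the supremum of the original problem \eqref{p_original}.

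For attainment I would use weak-$*$ compactness of the admissible set together with upper semicontinuity of the objective. The set $\mathrm{ad}^*_\gamma$ is convex, norm-bounded in $L^\infty(\Omega)$, and weak-$*$ closed --- the pointwise bounds $0\le\theta\le1$ pass to weak-$*$ limits, and, $\Omega$ being bounded, the constraint $\int_\Omega\theta\,dx=\gamma$ is weak-$*$ continuous --- hence weak-$*$ sequentially compact (Banach--Alaoglu plus separability of $L^1(\Omega)$). For each fixed $u\in H^1_0(\Omega)\setminus\{0\}$, the Rayleigh quotient $\theta\mapsto\big(\int_\Omega c^+(\theta)|\nabla u|^2\,dx\big)/\big(\int_\Omega\rho(\theta)u^2\,dx\big)$ is weak-$*$ continuous on $\mathrm{ad}^*_\gamma$, since its numerator and denominator are affine in $\theta$ paired against the fixed $L^1(\Omega)$-densities $|\nabla u|^2$ and $u^2$ and the denominator stays $\ge\rho_1\|u\|_{L^2(\Omega)}^2>0$. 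Therefore $\theta\mapsto\lambda_1(\theta)$, an infimum of weak-$*$ continuous functions, is weak-$*$ upper semicontinuous, and an upper semicontinuous function attains its supremum on a sequentially compact set. (Note this uses neither Theorem \ref{thm_main} nor homogenization.)

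For the equality of optimal values, one inequality is free: $\mathrm{ad}_\gamma\subset\mathrm{ad}^*_\gamma$, and at a characteristic function $\chi_\omega$ the scalar coefficient $c^+(\chi_\omega)$ in \eqref{p_relaxed} is exactly the isotropic coefficient of \eqref{eigen}, so the relaxed and original Rayleigh quotients agree there, giving $\sup\eqref{p_original}\le\max\eqref{p_relaxed}$. For the reverse inequality I would take a maximizer $\theta^*$ of \eqref{p_relaxed} and an associated first eigenfunction $u^*\in H^1_0(\Omega)$, chosen strictly positive in $\Omega$ (the first Dirichlet eigenvalue of a scalar uniformly elliptic operator is simple with a one-signed eigenfunction). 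Then I would pick a measurable field $A^*(\cdot)$ of rank-one laminates of the two phases with local volume fraction $\theta^*(x)$, laminated along a direction orthogonal to $\nabla u^*(x)$ wherever $\nabla u^*(x)\neq0$; for such a field $A^*(x)\in G_{\theta^*(x)}$, $A^*$ is uniformly elliptic, and the arithmetic-mean bound is attained along $\nabla u^*$, i.e.\ $A^*\nabla u^*\cdot\nabla u^* = c^+(\theta^*)|\nabla u^*|^2$ a.e. Since $c^+(\theta^*)I-A^*$ is positive semidefinite with quadratic form vanishing on $\nabla u^*$, one gets $A^*\nabla u^* = c^+(\theta^*)\nabla u^*$ a.e., so $u^*$ also solves $-\nabla\cdot(A^*\nabla u^*)=\lambda_1(\theta^*)\rho(\theta^*)u^*$; being positive it is the first eigenfunction of $(A^*,\rho(\theta^*))$, whence $\lambda_1(A^*,\rho(\theta^*))=\lambda_1(\theta^*)$ --- the attainment case of the min--max interchange of \cite{lipton94} already used to reduce \eqref{p_relaxed_pre} to \eqref{p_relaxed}.

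To conclude I would invoke homogenization: by density of laminated composites in the $G$-closure and the locality of $H$-convergence \cite{allaire02}, there is a sequence $\chi_n\in\mathrm{ad}_\gamma$ (with $\int_\Omega\chi_n\,dx=\gamma$ after a vanishing correction) such that $c^+(\chi_n)I$ $H$-converges to $A^*$ and $\rho(\chi_n)\to\rho(\theta^*)$ weakly-$*$; by stability of the first generalized eigenvalue under $H$-convergence of the stiffness and weak-$*$ convergence of the density \cite{cox96,attouch14}, the value of \eqref{p_original} at $\chi_n$, namely $\lambda_1(c^+(\chi_n)I,\rho(\chi_n))$, tends to $\lambda_1(A^*,\rho(\theta^*))=\lambda_1(\theta^*)$, giving $\sup\eqref{p_original}\ge\lambda_1(\theta^*)=\max\eqref{p_relaxed}$. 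The hard part is precisely this last paragraph: constructing the recovery sequence that realizes the prescribed homogenized tensor $A^*$ while respecting the volume constraint, and the spectral continuity of $\lambda_1$ under $H$-convergence. Both are the classical homogenization facts underlying \cite{allaire02} and are exactly what \cite[Proposition 2.3 and Corollary 4.2]{cox96} supply; a self-contained treatment would reproduce the corrector/two-scale argument there.
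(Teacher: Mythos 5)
The paper does not actually prove this proposition: it is imported verbatim from \cite[Proposition 2.3 and Corollary 4.2]{cox96}, so there is no in-paper argument to compare against. Your reconstruction follows the standard route that underlies the cited result, and its logical skeleton is sound: weak-$*$ sequential compactness of $\mathrm{ad}^*_\gamma$ plus upper semicontinuity of an infimum of weak-$*$ continuous Rayleigh quotients gives attainment; the inclusion $\mathrm{ad}_\gamma\subset\mathrm{ad}^*_\gamma$ gives one inequality; and the laminate construction (layers containing $\nabla u^*$, so that the Hashin--Shtrikman upper bound is saturated and, by the positive-semidefiniteness of $c^+(\theta^*)I-A^*$, in fact $A^*\nabla u^*=c^+(\theta^*)\nabla u^*$) correctly identifies $u^*$ as the positive, hence first, eigenfunction of the homogenized operator, reducing the reverse inequality to spectral stability under $H$-convergence. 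Two caveats are worth recording. First, the genuinely hard content --- measurable selection of the lamination direction with $A^*(x)\in G_{\theta^*(x)}$, locality/density of laminates so that a sequence $\chi_n$ with $c^+(\chi_n)I$ $H$-converging to $A^*$ exists, and convergence of the first eigenvalue along that sequence --- is deferred to \cite{allaire02,cox96}, which you acknowledge; so your text is an outline whose load-bearing steps are exactly the cited ones, not an independent proof. Second, the ``vanishing correction'' restoring $\int_\Omega\chi_n\,dx=\gamma$ deserves a sentence: modifying $\chi_n$ on a set of measure $o(1)$ perturbs a uniformly elliptic, uniformly bounded coefficient only in measure, and one must argue (via the min--max characterization and equi-integrability of $|\nabla u_n|^2$, or by quoting the corresponding continuity statement in \cite{cox96}) that this changes $\lambda_1$ by $o(1)$; also note that the whole construction presupposes $N\ge2$, since for $N=1$ the arithmetic-mean bound in \eqref{HSbound} is not attained and \eqref{p_relaxed} would not be the correct relaxation.
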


Next, instead of the maximization problem \eqref{p_original}, we consider the minimization problem 
\e{
\underset{\chi_\omega\in\mathrm{ad}_\gamma}{\min}\ \ \underset{u\in H_0^1(\Omega)\backslash\{0\}}{\inf}\frac{\int_{\mathrm{\Omega}} c^+(\chi_\omega)|\nabla u|^2 dx}{\int_{\mathrm{\Omega}}\rho(\chi_\omega)u^2 dx}.
}
The relaxed problem is
\e{
\underset{\substack{\theta\in\mathrm{ad}^*_\gamma,\\A^*\in G_\theta}}{\min}\ \underset{u\in H_0^1(\Omega)\backslash\{0\}}{\inf}\frac{\int_{\mathrm{\Omega}} A^*\nabla u\cdot\nabla u dx}{\int_{\mathrm{\Omega}}\rho(\theta)u^2 dx},
\label{p_relaxed_min0}
}
and it can also be simplified by interchanging minimization w.r.t.~$A^*$ and minimization w.r.t.~$u$. We obtain the pointwise minimization, which is equal to the harmonic mean by \eqref{HSbound}: $\min_{A^*\in G_\theta}\,A^*\nabla u\cdot\nabla u=c^-(\theta)|\nabla u|^2$. Thus, the relaxed problem \eqref{p_relaxed_min0} can be transformed into a simpler problem without $A^*$:
\e{\label{p_relaxed_min}
\underset{\theta\in\mathrm{ad}^*_\gamma}{\min}\ \underset{u\in H_0^1(\Omega)\backslash\{0\}}{\inf}\frac{\int_{\mathrm{\Omega}} c^-(\theta)|\nabla u|^2 dx}{\int_{\mathrm{\Omega}}\rho(\theta)u^2 dx}.
}
Note that the harmonic mean $c^-(\theta)$ instead of the arithmetic mean $c^+(\theta)$ appears in the numerator.

\subsection{Pseudo-concavity results in conductivity}\label{sec_cond}

We divide Problem \eqref{p_relaxed} into three cases: when the density $\theta$ appears both in the denominator and numerator of the Rayleigh quotient, when the density $\theta$ only appears in the denominator (when $c_1=c_2$), and when the density $\theta$ only appears in the numerator (when $\rho_1=\rho_2$). All three cases are studied in the literature \cite{casado15,casado17,casado22,cox96,henrot06,matsue15}.

\subsubsection{Density both in the denominator and numerator}

\begin{thm}\label{thm_psemax}
The relaxed problem \eqref{p_relaxed} is a pseudo-concave maximization problem, and thus every stationary point is a global maximizer.
\end{thm}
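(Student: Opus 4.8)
The plan is to realize the relaxed problem \eqref{p_relaxed} as a concrete instance of the abstract eigenvalue optimization problem governed by Assumption \ref{asm1}, so that pseudo-concavity follows from Theorem \ref{thm_main} and the ``no suboptimal local maxima'' conclusion follows from Proposition \ref{prp_psemax}. Concretely, I would take $X=L^\infty(\Omega)$, $V=H^1_0(\Omega)$ equipped with the norm $\|u\|_V=\|\nabla u\|_{L^2(\Omega)}$ (equivalent to the usual one by Poincar\'{e}'s inequality), and $H=L^2(\Omega)$, so that $V$ is densely and compactly embedded in $H$ by Rellich--Kondrachov. As the feasible set I would use $S=\mathrm{ad}^*_\gamma$, which is nonempty, convex, and closed in $L^\infty(\Omega)$ (both the pointwise bounds $0\le\theta\le1$ and the volume constraint survive $L^\infty$-limits). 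Because Assumption \ref{asm1} and the definition of pseudo-concavity on a closed convex set both require an ambient \emph{open} convex set, I would fix $\delta>0$ with $\delta<c_1/(c_2-c_1)$ and $\delta<\rho_1/(\rho_2-\rho_1)$ and set $D\coloneq\cur{\theta\in L^\infty(\Omega):\|\theta-\tfrac12\|_{L^\infty(\Omega)}<\tfrac12+\delta}$, an open ball (hence open and convex) that contains $\mathrm{ad}^*_\gamma$ and on which every $\theta$ satisfies $-\delta<\theta(x)<1+\delta$ a.e.

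Next I would introduce the operators. Define $B(\theta)\in\mathscr{B}_s(H,H)$ as multiplication by $\rho(\theta)$, so that $\ang{B(\theta)u,u}_H=\int_{\mathrm{\Omega}}\rho(\theta)u^2\,dx$, and define $A(\theta)\in\mathscr{B}_s(V,V)$ to be the bounded self-adjoint operator representing the bilinear form $(u,v)\mapsto\int_{\mathrm{\Omega}}c^+(\theta)\nabla u\cdot\nabla v\,dx$, as in the construction of \cite[Section 3]{cox95}, so that $\lambda_1(\theta)$ defined by \eqref{eig_gen} is exactly the inner infimum in \eqref{p_relaxed}. The three items of Assumption \ref{asm1} are then quick to check: ellipticity and boundedness hold on $D$ with $\underline{a}=c_1-(c_2-c_1)\delta>0$, $\overline{a}=c_2+(c_2-c_1)\delta$, and the analogous $\underline{b},\overline{b}$ coming from $\rho(\theta)$; the concavity/convexity requirement is immediate since $\theta\mapsto c^+(\theta)$ and $\theta\mapsto\rho(\theta)$ are affine, hence $\theta\mapsto\ang{A(\theta)u,u}_H$ and $\theta\mapsto\ang{B(\theta)u,u}_H$ are affine (a fortiori concave and convex) for each fixed $u\in V$; and the same affineness makes $\theta\mapsto A(\theta),B(\theta)$ of class $C^1$.

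Having verified Assumption \ref{asm1} on $D$, Theorem \ref{thm_main} gives that $\theta\mapsto\lambda_1(\theta)$ is pseudo-concave on $D$, hence pseudo-concave on the closed convex set $\mathrm{ad}^*_\gamma\subseteq D$, so \eqref{p_relaxed} is a pseudo-concave maximization problem over a closed convex set. For the final assertion, any local maximizer $\theta^*$ of $\lambda_1$ over the convex set $\mathrm{ad}^*_\gamma$ is a Clarke stationary point by the necessary condition for local optimality \cite[Proposition 2.3.2]{clarke90}, and then Proposition \ref{prp_psemax} forces $\theta^*$ to be a global maximizer; hence there is no suboptimal local maximum. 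I expect the only genuinely delicate point to be making the operator-theoretic identification rigorous inside the Gelfand triple $V\hookrightarrow H\hookrightarrow V^*$ of \cite[Section 3]{cox95} --- namely that $A(\theta)$ is a well-defined bounded self-adjoint operator whose abstract first eigenvalue coincides with the concrete Rayleigh quotient in \eqref{p_relaxed} --- together with the small bookkeeping needed to ensure that the enlarged set $D$ is open and convex and that ellipticity with a uniform positive lower bound survives on it; the verification of concavity/convexity and of smoothness is essentially trivial here because the dependence on $\theta$ is affine.
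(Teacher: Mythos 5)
Your proposal is correct and follows essentially the same route as the paper: identify $V=H^1_0(\Omega)$, $H=L^2(\Omega)$, the variational operators $A(\theta)$, $B(\theta)$, verify Assumption \ref{asm1} (ellipticity via Poincar\'{e}, concavity/convexity and smoothness from affineness in $\theta$), and conclude via Theorem \ref{thm_main} and Proposition \ref{prp_psemax}. The only difference is that you spell out details the paper leaves implicit --- the explicit open convex neighborhood $D\supseteq\mathrm{ad}^*_\gamma$ with uniform ellipticity constants and the step from local maximizer to Clarke stationary point --- which is a welcome tightening rather than a different argument.
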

\begin{proof}
We take $X=L^\infty(\Omega)$, $S=\mathrm{ad}^*_\gamma$, and an open convex set $D\subset L^\infty(\Omega)$ containing $S$ so that the ellipticity condition in Assumption \ref{asm1} on $D$. Problem \eqref{p_relaxed} satisfies Assumption \ref{asm1} with $H=L^2(\Omega)$, $V=H^1_0(\Omega)$, $\ang{A(\theta)u,u}_H=\int_\Omega c^+(\theta)|\nabla u|^2dx$, and $\ang{B(\theta)u,u}_H=\int_\Omega \rho(\theta)u^2dx$. The ellipticity of the operator $A(\theta)$ follows from Poincar\'{e}'s inequality (cf.~\cite[Proposition 4.3.10]{allaire07}). Operators in the numerator and the denominator are both affine with respect to $\theta$. Therefore, it is a direct consequence of Theorem \ref{thm_main} and Proposition \ref{prp_psemax}.
\end{proof}

In contrast to the maximization problem \eqref{p_relaxed}, Theorem \ref{thm_main} does not apply to the minimization problem \eqref{p_relaxed_min}, since the numerator is convex, rather than concave, in $\theta$. This type of problem is studied in \cite{cox96}.

\subsubsection{Density only in the numerator}

When $\theta$ only appears in the numerator ($\rho_1=\rho_2=1$ without loss of generality), the relaxed maximization problem becomes
\e{
\underset{\theta\in\mathrm{ad}^*_\gamma}{\max}\ \underset{u\in H_0^1(\Omega)\backslash\{0\}}{\inf}\frac{\int_{\mathrm{\Omega}}c^+(\theta)|\nabla u|^2 dx}{\int_{\mathrm{\Omega}}u^2 dx},
\label{p_num_rel_max}
}
which is a concave maximization problem since the objective function is a minimum of concave functions. In this case, the non-existence of classical 0-1 solutions is shown in \cite{casado22}.

In contrast, the relaxed minimization problem
\e{
\underset{\theta\in\mathrm{ad}^*_\gamma}{\min}\ \underset{u\in H_0^1(\Omega)\backslash\{0\}}{\inf}\frac{\int_{\mathrm{\Omega}}c^-(\theta)|\nabla u|^2 dx}{\int_{\mathrm{\Omega}}u^2 dx},
\label{p_num_rel_min}
}
cannot be expected to be a pseudo-concave minimization problem since the numerator is convex, rather than concave, in $\theta$. This problem does not have classical 0-1 solutions, either \cite{casado15,casado17}.

\subsubsection{Density only in the denominator}

When $\theta$ only appears in the denominator ($c_1=c_2=1$ without loss of generality), the maximization problem
\e{
\underset{\theta\in\mathrm{ad}^*_\gamma}{\max}\ \underset{u\in H_0^1(\Omega)\backslash\{0\}}{\inf}\frac{\int_{\mathrm{\Omega}}|\nabla u|^2 dx}{\int_{\mathrm{\Omega}}\rho(\theta)u^2 dx}
\label{p_den_rel_max}
}
is also a pseudo-concave maximization problem and every stationary point is a global maximizer due to Theorem \ref{thm_psemax}.

In this case, the minimization problem
\e{
\underset{\theta\in\mathrm{ad}^*_\gamma}{\min}\ \underset{u\in H_0^1(\Omega)\backslash\{0\}}{\inf}\frac{\int_{\mathrm{\Omega}}|\nabla u|^2 dx}{\int_{\mathrm{\Omega}}\rho(\theta)u^2 dx},
\label{p_den_rel_min}
}
becomes a pseudo-concave minimization problem. Although a general extreme-point principle for pseudo-concave minimization is not available in this infinite-dimensional setting (see Remark \ref{rmk_no_general_extreme}), the existence of a classical minimizer, which is an extreme point\footnote{The fact that the set of all extreme points of $\mathrm{ad}^*_\gamma$ is $\mathrm{ad}_\gamma$ is proved in \cite[Propositions 2.2 and 2.5]{cox90}} of $\mathrm{ad}^*_\gamma$, is proved by a problem-specific argument in \cite[Corollary 6.2]{cox90} (see also \cite[Theorem 9.2.3]{henrot06}).

\begin{prp}[{\cite[Corollary 6.2]{cox90}}]\label{crl_den_rel}
Problem \eqref{p_den_rel_min} is a pseudo-concave minimization problem. Moreover, if it admits a minimizer, then it admits a classical $0$-$1$ minimizer that belongs to $\mathrm{ad}_\gamma$.
\end{prp}

When the design domain $\Omega$ is a ball, the explicit, unique, and classical 0-1 solution is known due to symmetry.

\begin{prp}[\cite{krein55} (cf.~{\cite[Theorem 9.4.1]{henrot06}})]\label{prp_ball}
Let $\Omega=\mathcal{B}(0,R)\subset\r^N$ be a Euclidean ball of radius $R>0$ and set $\omega_N\coloneq|\mathcal{B}(0,1)|$. Then
\e{
\overline{\theta}(x)=
\begin{cases}
0 & \mathrm{if}\ |x|\le r,\\
1 & \mathrm{if}\ r<|x|\le R
\end{cases}
\label{analytic_max}
}
with $r=\prn{R^N-\gamma/\omega_N}^{1/N}$ is the unique solution to the maximization problem \eqref{p_den_rel_max}. Moreover,
\e{
\underline{\theta}(x)=
\begin{cases}
1 & \mathrm{if}\ |x|\le r,\\
0 & \mathrm{if}\ r<|x|\le R
\end{cases}
\label{analytic_min}
}
with $r=\prn{\gamma/\omega_N}^{1/N}$ is the unique solution to the minimization problem \eqref{p_den_rel_min}.
\end{prp}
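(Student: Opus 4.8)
The plan is to recognize this as the classical Krein-type rearrangement result: reduce each of the two optimization problems to the radially symmetric setting and then pin down the extremal density with the bathtub principle. The facts I would establish first are that, since $\Omega$ is a ball and hence connected, the first eigenvalue of $-\Delta u=\lambda\rho u$ with Dirichlet data is simple for every admissible positive weight $\rho$, so its positive first eigenfunction $u_\rho$ is unique; hence, when $\rho$ is radial, rotational invariance forces $u_\rho$ to be radial, and writing $u_\rho(x)=\phi(|x|)$ the radial form of the equation, $(s^{N-1}\phi')'=-\lambda\,s^{N-1}\rho(s)\phi<0$ together with $\phi'(0)=0$, gives $\phi'(s)<0$ on $(0,R)$, i.e.\ $u_\rho$ is \emph{strictly} radially decreasing. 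In particular $u_\rho^2$ has no level set of positive measure, which is exactly the ingredient that will yield uniqueness.

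For the maximization problem \eqref{p_den_rel_max} I would argue directly, using the candidate optimum's own eigenfunction as a test function. Write $\overline u:=u_{\rho(\overline\theta)}$; by the above it is radial and strictly radially decreasing, so $\overline\theta$ is precisely the indicator of a sublevel set $\{\overline u^2<t\}$ of measure $\gamma$. For arbitrary $\theta\in\mathrm{ad}^*_\gamma$, inserting $\overline u$ into the Rayleigh quotient defining $\lambda_1(\theta)$ gives $\lambda_1(\theta)\le\frac{\int_\Omega|\nabla\overline u|^2\,dx}{\int_\Omega\rho(\theta)\overline u^2\,dx}$. Since $\rho(\theta)=\rho_1+(\rho_2-\rho_1)\theta$ with $\rho_2>\rho_1$, and since the bathtub principle says that $\overline\theta$ minimizes $\theta'\mapsto\int_\Omega\theta'\overline u^2\,dx$ over $\mathrm{ad}^*_\gamma$ (being the indicator of a sublevel set of $\overline u^2$ of that mass), the denominator can only increase when $\theta$ is replaced by $\overline\theta$; hence $\lambda_1(\theta)\le\lambda_1(\overline\theta)$. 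Equality forces $\int_\Omega\theta\overline u^2\,dx=\int_\Omega\overline\theta\overline u^2\,dx$, and since $\overline u^2$ has no flat level set the bathtub minimizer is unique, so $\theta=\overline\theta$ a.e., which gives the uniqueness.

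For the minimization problem \eqref{p_den_rel_min} this shortcut is not available, because the first eigenfunction of a general non-radial $\theta$ is not radial, so I would symmetrize first. Given $\theta\in\mathrm{ad}^*_\gamma$ with first eigenfunction $u$, let $u^*$ be its Schwarz symmetrization and $\theta^\downarrow$ the radially non-increasing rearrangement of $\theta$ (which stays in $\mathrm{ad}^*_\gamma$ because $\Omega$ is a ball), so that $\rho(\theta^\downarrow)$ is the symmetric decreasing rearrangement of $\rho(\theta)$. Then $\int|\nabla u^*|^2\le\int|\nabla u|^2$ by the P\'olya--Szeg\H{o} inequality, $\int(u^*)^2=\int u^2$, and $\int\rho(\theta^\downarrow)(u^*)^2\ge\int\rho(\theta)u^2$ by the Hardy--Littlewood inequality; testing $\lambda_1(\theta^\downarrow)$ with $u^*$ then yields $\lambda_1(\theta^\downarrow)\le\lambda_1(\theta)$, reducing the problem to radially non-increasing profiles. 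For those the eigenfunction is radial and strictly radially decreasing, and the bathtub argument of the previous paragraph — now \emph{maximizing} $\theta'\mapsto\int_\Omega\theta'u^2\,dx$ over $\mathrm{ad}^*_\gamma$, whose maximizer is the indicator of the centered ball of mass $\gamma$, namely $\underline\theta$ — gives $\lambda_1(\underline\theta)\le\lambda_1(\theta^\downarrow)\le\lambda_1(\theta)$. (As a shortcut, Proposition \ref{prp_psemin} already guarantees a minimizer of the form $\chi_\omega$ with $|\omega|=\gamma$, after which applying the symmetrization to $\chi_\omega$ shows $\omega$ may be taken to be a centered ball.) Uniqueness again follows by tracking the equality cases.

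The step I expect to be the real work is this equality-case analysis for the minimization problem, specifically the rigidity in the P\'olya--Szeg\H{o} inequality: one needs that equality there forces $u$ to coincide with its symmetric decreasing rearrangement — using that the only admissible translation for a function vanishing on $\partial\Omega$ is the trivial one — which, combined with the (easier) equality cases of the Hardy--Littlewood and bathtub inequalities and with the strict radial monotonicity of the eigenfunction, gives $\theta=\underline\theta$. A secondary point requiring care is orienting the two families of rearrangements consistently so that the numerator estimate (P\'olya--Szeg\H{o}, which only ever lowers the Dirichlet integral) and the denominator estimate push $\lambda_1$ in the same direction; this is precisely why the maximization case must be handled by testing against the candidate's eigenfunction rather than by symmetrizing an arbitrary competitor.
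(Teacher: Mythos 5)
The paper does not actually prove Proposition \ref{prp_ball}: it is quoted from Krein (cf.\ Henrot, Theorem 9.4.1), and the text only remarks that the proof uses the symmetric decreasing rearrangement technique, as in the Faber--Krahn inequality. Your sketch is a correct reconstruction of exactly that standard argument, and the two halves are organized sensibly: for the maximization \eqref{p_den_rel_max} the direct ``test with the candidate's own eigenfunction + bathtub principle'' argument is sound and needs no rearrangement at all (the chain $\lambda_1(\theta)\le \int|\nabla\overline u|^2/\int\rho(\theta)\overline u^2\le\lambda_1(\overline\theta)$ closes because $\overline u$ is strictly radially decreasing, so the bathtub minimizer of $\theta'\mapsto\int\theta'\overline u^2$ is unique and equals $\overline\theta$); for the minimization \eqref{p_den_rel_min} the P\'olya--Szeg\H{o}/Hardy--Littlewood symmetrization step is the expected route. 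You are also right to flag the equality case of P\'olya--Szeg\H{o} (Brothers--Ziemer rigidity, requiring that the symmetrized eigenfunction have no critical set of positive measure) as the one genuinely nontrivial ingredient for uniqueness in the minimization problem; as written this is identified but not discharged, so the proposal is a complete proof only for the maximization half and a correct blueprint for the minimization half. One point you inherit without comment: with the volume normalization $\int_\Omega\theta\,dx=\gamma$, the set $\{\overline\theta=1\}$ is the annulus, so its radius should satisfy $\pi(R^2-r^2)=\gamma$, i.e.\ $r=\sqrt{R^2-\gamma/\pi}$, and symmetrically $r=\sqrt{\gamma/\pi}$ for $\underline\theta$; the two formulas in the statement appear to be interchanged (they coincide only when $\gamma=\pi R^2/2$, the half-volume case used in the experiments), and both are specific to $N=2$ even though the statement allows $\Omega\subset\r^N$.
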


A proof of Proposition \ref{prp_ball} is similar to that of the classical Faber--Krahn inequality and uses the symmetric decreasing rearrangement technique; see \cite{henrot06}.

As shown in numerical experiments in Section \ref{sec_num}, we can actually obtain a classical 0-1 design by a simple gradient method. Since a solution to the unrelaxed problem
\e{
\underset{\chi_\omega\in\mathrm{ad}_\gamma}{\min}\ \underset{u\in H_0^1(\Omega)\backslash\{0\}}{\inf}\frac{\int_{\mathrm{\Omega}}|\nabla u|^2 dx}{\int_{\mathrm{\Omega}}\rho(\chi_\omega)u^2 dx}
\label{p_den}
}
can be computed easily by solving \eqref{p_den_rel_min} (especially, in the case when $\Omega$ is a ball, the explicit solution is known), Problem \eqref{p_den} can be used as a benchmark problem for testing whether heuristic methods can recover a classical $0$-$1$ minimizer\footnote{By heuristic, we mean that (i) they have no convergence guarantee to a stationary point and (ii) existence of classical 0-1 solution is often ignored in the context.}, such as the level-set method \cite{allaire04} and evolutionary algorithms \cite{deaton14}. Also, the maximization problems \eqref{p_relaxed}, \eqref{p_den_rel_max}, and \eqref{p_num_rel_max} have no (suboptimal) local optimal solutions. Thus, they can be used as benchmark problems for testing whether a method reaches a stationary point efficiently, without the additional complication of suboptimal local maximizers.

\subsection{Problem setting in elasticity}

We consider the optimization problem of the first eigenvalue in linear elasticity \cite[Section 4.1.6]{allaire02} (see also \cite{allaire07} for the fundamentals of linear elasticity). We assume that $\Omega\subset\r^N$ is a regular bounded open set and its boundary $\partial\Omega$ is divided into two disjoint parts $\Gamma_D$ and $\Gamma_N$ (Dirichlet and Neumann), where the surface measure of $\Gamma_D$ is nonzero. We define the admissible displacement space by
\e{
V_D\coloneq
\cur{
v\in H^1(\Omega)^N\mid v=0\ \text{on}\ \Gamma_D
},
}
where the boundary condition is understood in the trace sense.

Eigenvalues $\lambda\in\r$ and eigenvectors $u\in V_D$ (typically $N=2$ or $3$) satisfy (in the weak sense)
\e{
\begin{cases}
-\mathrm{div}A^+(\chi_\omega)e(u)=\lambda\rho(\chi_\omega)u & \text{in}\ \Omega\\
A^+(\chi_\omega)e(u)n=0 & \text{on}\ \Gamma_N\\
u=0 & \text{on}\ \Gamma_D
\end{cases}
\label{eq_elas}
}
where $e(u)=(\nabla u + \nabla u^\top)/2$ is the strain tensor and
\e{\ald{
A^+(\chi_\omega(x)) &= A_1+(A_2-A_1)\chi_\omega(x),\\
A_1 &= 2\mu_1I_4+\prn{\kappa_1-\frac{2\mu_1}{N}}I_2\otimes I_2,\\
A_2 &= 2\mu_2I_4+\prn{\kappa_2-\frac{2\mu_2}{N}}I_2\otimes I_2\\
}}
with constants $0<\mu_1\le\mu_2$, $0<\kappa_1\le\kappa_2$, the second-order identity tensor $I_2$, and the identity tensor of the fourth order $I_4$. The density $\rho(\chi_\omega)$ is the same as in the conductivity case \eqref{cond_dens}. The equation \eqref{eq_elas} models the vibration of an $N$-dimensional linear elastic material.

\begin{rmk}
There are two main differences from the conductivity setting in Section \ref{subsec_cond_setting}. One is that the equation \eqref{eq_elas} is a system of equations ($u$ is a vector-valued function, unlike a scalar-valued function in Section \ref{sec_cond}). In this case, the first eigenvalue $\lambda_1(\theta)$ is not necessarily simple (cf.~\cite[Section 7.3.3]{allaire07}), hence $\theta\mapsto\lambda_1(\theta)$ is not necessarily differentiable. The other is that $A_1,A_2$ are not proportional to an identity tensor $I_4$. This leads to a more complicated relaxed problem than the conductivity setting.
\end{rmk}

We consider the following two-phase optimal design problem of linear elastic materials:
\e{
\underset{\chi_\omega\in\mathrm{ad}_\gamma}{\max}\ \underset{u\in V_D\backslash\{0\}}{\inf}\frac{\int_{\mathrm{\Omega}} A^+(\chi_\omega)e(u):e(u) dx}{\int_{\mathrm{\Omega}} \rho(\chi_\omega)|u|^2 dx}.
\label{p_original_elas}
}
In engineering applications, maximization of the first eigenvalue is important since it leads to a structure with high dynamic stiffness.

The relaxed problem of \eqref{p_original_elas} by homogenization is
\e{
\underset{\substack{\theta\in\mathrm{ad}^*_\gamma,\\A^*\in G_\theta}}{\max}\ \underset{u\in V_D\backslash\{0\}}{\inf}\frac{\int_{\mathrm{\Omega}} A^* e(u):e(u) dx}{\int_{\mathrm{\Omega}} \rho(\theta)|u|^2 dx},
}
where $G_\theta$ is the G-closure of two isotropic materials in elasticity \cite[Section 2.3.1]{allaire02}. The relaxed problem can also be simplified by using the Hashin--Shtrikman upper bound $A^*(\theta)e(u):e(u)=\max_{A^*\in G_\theta} A^* e(u):e(u)$ \cite{allaire02}:
\e{
\underset{\theta\in\mathrm{ad}^*_\gamma}{\max}\ \underset{u\in V_D\backslash\{0\}}{\inf}\frac{\int_{\mathrm{\Omega}} A^*(\theta)e(u):e(u) dx}{\int_{\mathrm{\Omega}} \rho(\theta)|u|^2 dx}.
\label{p_relaxed_elas}
}
However, in the elasticity setting, the Hashin--Shtrikman upper bound $A^*(\theta)$ is more complicated than the arithmetic mean $A^+(\theta)$ ($\theta\mapsto A^*(\theta)$ is not affine).

\subsection{Pseudo-concavity results in elasticity}\label{subsec_elas}

Unfortunately, the relaxed problem \eqref{p_relaxed_elas} cannot be expected to be a pseudo-concave optimization since the numerator of the Rayleigh quotient is not affine (nor concave) with respect to $\theta$. However, when we replace $A^*(\theta)$ by the arithmetic mean $A^+(\theta)$, the approximation problem
\e{
\underset{\theta\in\mathrm{ad}^*_\gamma}{\max}\ \underset{u\in V_D\backslash\{0\}}{\inf}\frac{\int_{\mathrm{\Omega}} A^+(\theta)e(u):e(u) dx}{\int_{\mathrm{\Omega}} \rho(\theta)|u|^2 dx},
\label{p_relaxed_elas_pse}
}
is a pseudo-concave optimization problem. Since the arithmetic mean $A^+(\theta)e(u):e(u)$ is always greater than or equal to the Hashin--Shtrikman upper bound $A^*(\theta(x))e(u(x)):e(u(x))$ \cite[Section 2.3.2]{allaire02}, the optimal value of \eqref{p_relaxed_elas_pse} is always greater than or equal to that of \eqref{p_relaxed_elas}. Therefore, the approximation problem \eqref{p_relaxed_elas_pse} gives an upper bound of \eqref{p_relaxed_elas}.

\begin{thm}
Problem \eqref{p_relaxed_elas_pse} is a pseudo-concave maximization problem, and thus every stationary point is a global maximizer. Moreover, the optimal value of Problem \eqref{p_relaxed_elas_pse} is no less than the optimal value of Problem \eqref{p_relaxed_elas}.
\end{thm}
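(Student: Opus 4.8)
The plan is to reduce the first assertion to Theorem~\ref{thm_main} together with Proposition~\ref{prp_psemax}, exactly as in the proof of Theorem~\ref{thm_psemax}, and to obtain the second assertion from the pointwise Hashin--Shtrikman comparison between $A^+(\theta)$ and $A^*(\theta)$.

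First I would cast \eqref{p_relaxed_elas_pse} into the abstract framework of Assumption~\ref{asm1}. Take $H=L^2(\Omega)^N$ and let $V$ be the appropriate energy space (namely $H^1_0(\Omega)^N$, or more generally the subspace of $H^1(\Omega)^N$ of functions vanishing on $\Gamma_D$); since $\Gamma_D$ has positive surface measure, $V$ is densely and compactly embedded in $H$ by the Rellich--Kondrachov theorem. Define $A(\theta)\in\mathscr{B}_s(V,V)$ and $B(\theta)\in\mathscr{B}_s(H,H)$ in the variational sense by $\ang{A(\theta)u,u}_H=\int_\Omega A^+(\theta)e(u):e(u)\,dx$ and $\ang{B(\theta)u,u}_H=\int_\Omega\rho(\theta)|u|^2\,dx$, so that $\lambda_1$ of \eqref{eig_gen} coincides with the objective of \eqref{p_relaxed_elas_pse}. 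The three properties of Assumption~\ref{asm1} are then checked as follows. (i) Ellipticity and boundedness: $A^+(\theta(x))=(1-\theta(x))A_2+\theta(x)A_1$ is a convex combination of the two symmetric positive-definite Hooke tensors $A_1,A_2$ (positive-definiteness follows from $\mu_i>0$ and $\kappa_i>0$), hence $\alpha|\eta|^2\le A^+(\theta)\eta:\eta\le\beta|\eta|^2$ for all symmetric $\eta$ with $\alpha,\beta>0$ independent of $\theta$; combined with Korn's inequality on $V$ this gives $\underline a\|u\|_V^2\le\ang{A(\theta)u,u}_H\le\overline a\|u\|_V^2$, while $\rho_1\le\rho(\theta)\le\rho_2$ yields the bounds for $B$. (ii) Concavity/convexity: since $\theta\mapsto A^+(\theta)$ and $\theta\mapsto\rho(\theta)$ are affine, $\theta\mapsto\ang{A(\theta)u,u}_H$ and $\theta\mapsto\ang{B(\theta)u,u}_H$ are affine, hence simultaneously concave and convex. (iii) Smoothness: affine maps are continuously Fr\'echet differentiable. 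Therefore Assumption~\ref{asm1} holds, Theorem~\ref{thm_main} gives that the objective of \eqref{p_relaxed_elas_pse} is pseudo-concave on $\mathrm{ad}^*_\gamma$, which is a nonempty closed convex subset of $L^\infty(\Omega)$, and Proposition~\ref{prp_psemax} then shows that every Clarke stationary point is a global maximizer; in particular there is no suboptimal local maximum.

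For the comparison of optimal values, I would use the pointwise inequality $A^*(\theta(x))\,\eta:\eta\le A^+(\theta(x))\,\eta:\eta$ for every symmetric matrix $\eta$, valid for any admissible homogenized tensor (the arithmetic mean dominates the Hashin--Shtrikman upper bound, \cite[Section 2.3.2]{allaire02}). Fixing $\theta\in\mathrm{ad}^*_\gamma$ and $u\in H^1_0(\Omega)^N\backslash\{0\}$ and integrating with $\eta=e(u(x))$, the numerator of the Rayleigh quotient associated with $A^+(\theta)$ is no smaller than that associated with $A^*(\theta)$, while the denominators coincide; hence the $A^+$-Rayleigh quotient dominates the $A^*$-Rayleigh quotient pointwise in $u$. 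Taking the infimum over $u$ preserves the inequality (the infimum of a larger function is no smaller), so $\lambda_1^{A^+}(\theta)\ge\lambda_1^{A^*}(\theta)$ for every $\theta\in\mathrm{ad}^*_\gamma$, and taking the supremum over $\theta$ gives that the optimal value of \eqref{p_relaxed_elas_pse} is no less than that of \eqref{p_relaxed_elas}.

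These computations are all routine; the one point requiring care is the ellipticity estimate for the elasticity operator, where Poincar\'e's inequality (used in the conductivity case, Theorem~\ref{thm_psemax}) must be replaced by Korn's inequality on the mixed-boundary space $V$, together with the uniform positive-definiteness of $A^+(\theta)$. The other seemingly delicate point, tracking the direction of the inequality through the infimum over $u$ and the supremum over $\theta$, is immediate once one observes that $A^+(\theta)\succeq A^*(\theta)$ pointwise and that both Rayleigh quotients share the same (strictly positive) denominator.
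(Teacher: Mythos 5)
Your proposal is correct and follows essentially the same route as the paper: casting \eqref{p_relaxed_elas_pse} into Assumption~\ref{asm1} with $H=L^2(\Omega)^N$ and $V=\{v\in H^1(\Omega)^N\mid v=0\text{ on }\Gamma_D\}$, obtaining ellipticity from Korn's inequality, noting that both forms are affine in $\theta$, and invoking Theorem~\ref{thm_main} with Proposition~\ref{prp_psemax} for the first claim and the arithmetic-mean (Hashin--Shtrikman) bound for the second. You simply spell out the verification of each hypothesis and the inf/sup monotonicity step in more detail than the paper does.
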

\begin{proof}
Problem \eqref{p_relaxed_elas_pse} satisfies Assumption \ref{asm1} with $H=L^2(\Omega)^N$, $V=V_D$, $\ang{A(\theta)u,u}_H=\int_\Omega A^+(\theta)e(u):e(u)dx$, and $\ang{B(\theta)u,u}_H=\int_\Omega \rho(\theta)|u|^2dx$. The ellipticity of the operator $A(\theta)$ follows from Korn's inequality (cf.~\cite[Lemma 5.3.3]{allaire07}). Operators in the numerator and the denominator are both affine with respect to $\theta$. Thus, the first part is a direct consequence of Theorem \ref{thm_main} and Proposition \ref{prp_psemax}. The second part follows from the arithmetic mean bound \cite[Section 2.3.2]{allaire02}.
\end{proof}

In a very special case when $A_1=A_2$ (when $\theta$ only appears in the denominator of the Rayleigh quotient), the relaxed problem becomes
\e{
\underset{\theta\in\mathrm{ad}^*_\gamma}{\max}\ \underset{u\in V_D\backslash\{0\}}{\inf}\frac{\int_{\mathrm{\Omega}} A_1 e(u):e(u) dx}{\int_{\mathrm{\Omega}} \rho(\theta)|u|^2 dx},
\label{p_relaxed_elas_den}
}
which is a pseudo-concave maximization problem.

\begin{crl}
Problem \eqref{p_relaxed_elas_den} is a pseudo-concave maximization problem, and thus every stationary point is a global maximizer.
\end{crl}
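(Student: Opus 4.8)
The plan is to recognize Problem~\eqref{p_relaxed_elas_den} as the special case $A_1=A_2$ of Problem~\eqref{p_relaxed_elas_pse}, in which the numerator $A^+(\theta)e(u):e(u)$ collapses to the $\theta$-independent term $|e(u)|^2$; the corollary then follows verbatim from the preceding theorem. Alternatively, and a little more transparently, I would verify Assumption~\ref{asm1} directly and then invoke Theorem~\ref{thm_main} together with Proposition~\ref{prp_psemax}.

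First I would fix the abstract framework: take $H=L^2(\Omega)^N$, $V=\{v\in H^1(\Omega)^N\mid v=0\text{ on }\Gamma_D\}$, let $A(\theta)$ be defined in the variational sense by $\ang{A(\theta)u,u}_H=\int_\Omega|e(u)|^2\,dx$, and set $B(\theta)u=\rho(\theta)u$. Because $A$ carries no dependence on $\theta$, the map $\theta\mapsto\ang{A(\theta)u,u}_H$ is constant, hence trivially concave, while $\theta\mapsto\ang{B(\theta)u,u}_H=\int_\Omega\rho(\theta)u^2\,dx$ is affine in $\theta$ and therefore convex; continuous differentiability in $\theta$ is immediate for both.

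Second I would check ellipticity and boundedness. The embedding $V\hookrightarrow H$ is dense and compact since $\Gamma_D$ has nonzero surface measure, and Korn's inequality (cf.~\cite[Lemma 5.3.3]{allaire07}) supplies a constant $\underline{a}>0$ with $\int_\Omega|e(u)|^2\,dx\ge\underline{a}\|u\|_V^2$, the upper bound being clear; for $B$ the pointwise bounds $\rho_1\le\rho(\theta)\le\rho_2$ a.e.\ give $\rho_1\|u\|_H^2\le\ang{B(\theta)u,u}_H\le\rho_2\|u\|_H^2$. Thus Assumption~\ref{asm1} holds with $S=\mathrm{ad}^*_\gamma$.

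Finally, Theorem~\ref{thm_main} gives that $\theta\mapsto\lambda_1(\theta)$ is pseudo-concave on $\mathrm{ad}^*_\gamma$; since $\mathrm{ad}^*_\gamma$ is a nonempty closed convex subset of $L^\infty(\Omega)$, Proposition~\ref{prp_psemax} then yields that every Clarke stationary point of~\eqref{p_relaxed_elas_den} is a global maximizer, i.e.\ there are no suboptimal local maxima. I do not expect any genuine obstacle here; the only points needing a word of care are the use of Korn's inequality, which requires $\Gamma_D$ to have positive surface measure (part of the standing hypotheses of the elasticity setting), and the observation that $\lambda_1$ here must be read as the first eigenvalue of the generalized problem~\eqref{eq_elas} with $A_1=A_2$, so that the Rayleigh quotient in~\eqref{p_relaxed_elas_den} indeed matches the abstract form~\eqref{eig_gen}.
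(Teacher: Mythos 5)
Your proposal is correct and follows exactly the route the paper intends: the corollary is the special case $A_1=A_2$ of the preceding theorem, whose proof verifies Assumption~\ref{asm1} with the same choices of $H$, $V$, $A(\theta)$, $B(\theta)$ and Korn's inequality, and then invokes Theorem~\ref{thm_main} and Proposition~\ref{prp_psemax}. The only cosmetic slip is writing $\int_\Omega\rho(\theta)u^2\,dx$ where the vector-valued setting requires $\int_\Omega\rho(\theta)|u|^2\,dx$; this does not affect the argument.
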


The existence of suboptimal local maxima in the relaxed problem \eqref{p_relaxed_elas} is still open \cite{allaire02}. It is not even known whether a simpler compliance minimization problem has a suboptimal local minimum or not in the elasticity setting. The difficulty comes from the nonlinearity of the homogenized tensor $A^*(\theta)$ w.r.t.~$\theta$.

\section{Numerical experiments}\label{sec_num}

We provide simple numerical results on relaxed optimal design problems in the conductivity setting: the pseudo-concave maximization problem \eqref{p_relaxed} and the pseudo-concave minimization problem \eqref{p_den_rel_min} in a circular domain.

We do not treat relaxed optimal design problems in elasticity since nonsmooth optimization algorithms in the non-reflexive Banach space $L^\infty(\Omega)$ have not been studied theoretically to the best of the author's knowledge, and thus they are out of the scope of this research (see also Remark \ref{rmk_reflex}).

\subsection{Implementation}

In this section, we explain the implementation of an optimization algorithm. See also the FreeFEM code in Appendix \ref{apn}.

For an optimization algorithm, we use the projected gradient method \cite{blank17}
\e{
\theta_{k+1}=\Pi_{\mathrm{ad}^*_\gamma}(\theta_k \pm \alpha_k\nabla \lambda_1(\theta_k)),
\label{pg}
}
where the subscript $k$ is the iteration counter, $\Pi_{\mathrm{ad}^*_\gamma}$ is the projection operator onto the set $\mathrm{ad}^*_\gamma$, $\alpha_k>0$ is a stepsize, and $\pm$ corresponds to maximization and minimization of $\lambda_1(\theta)$, respectively.

Since the first eigenvalue is simple for the conductivity with a connected domain considered below, the objective function $\theta\mapsto\lambda_1(\theta)$ of Problem \eqref{p_relaxed} is continuously differentiable. By Proposition \ref{prp_cla}, its gradient is
\e{
\nabla\lambda_1(\theta)=(c_2-c_1)|\nabla u|^2-\lambda_1(\theta)(\rho_2-\rho_1)u^2,
\label{grad_lambda}
}
where $u$ is normalized by $\int_\Omega\rho(\theta)u^2dx=1$. Set $c_1=c_2$ for Problem \eqref{p_den_rel_min}.

\begin{rmk}\label{rmk_reflex}
Since the optimization variable $\theta$ belongs to a non-reflexive Banach space $L^\infty(\Omega)$, the projected gradient method \eqref{pg} needs to be modified for a theoretical study. The gradient of the objective function does not necessarily belong to the space of the optimization variable. Indeed, the gradient defined by \eqref{grad_lambda} belongs to $L^1(\Omega)\subsetneq (L^\infty(\Omega))^*$ but not necessarily belongs to $L^\infty(\Omega)$. For a theoretical study of the projected gradient method in a non-reflexive Banach space, see \cite{blank17}. In our numerical experiments, the addition and subtraction of the optimization variable and the gradient, which belong to different finite element spaces \texttt{Xh} and \texttt{Vh}, are calculated automatically in FreeFEM (see the FreeFEM code in Appendix \ref{apn}). The validity of this discretization is out of the scope of this paper.
\end{rmk}

We set the parameters as follows. The volume parameter is $\gamma=0.5|\Omega|$, stepsize for the projected gradient method is $0.1$, $\rho_1=0.3$ and $\rho_2=0.7$, $c_1=0.5$ and $c_2=1$ for \eqref{p_relaxed} and $c_1=c_2=1$ for \eqref{p_den_rel_min}. We use the uniform initial design $\theta(x)=\gamma/|\Omega|$. We discretize $\Omega$ into a triangular mesh and use $P_0$ (piecewise constant) finite elements for $\theta$ and $P_2$ (piecewise quadratic) finite elements for $u$. We use $P_2$ instead of $P_1$ to avoid the numerical instability called checkerboard pattern \cite{bendsoe04}.

All the experiments were conducted on a MacBook Pro (2019, 1.4 GHz Quad-Core Intel Core i5, 8 GB memory) with FreeFEM v.4.6 \cite{freefem,hecht12}. 

\subsection{Results}

Numerical solutions to Problems \eqref{p_relaxed} and \eqref{p_den_rel_min} in a circular domain $\Omega$ obtained by $200$ iterations of the projected gradient method are shown in Figure \ref{fig}.

Theorem \ref{thm_psemax} suggests that Figure \ref{fig}(a) is a global maximizer. As Proposition \ref{crl_den_rel} suggests, Figure \ref{fig}(b) shows almost a classical $0$-$1$ solution. Also, Figure \ref{fig}(b) is consistent with Krein's result (Proposition \ref{prp_ball}).

\begin{figure}[H]
  \begin{tabular}{cc}
  \begin{minipage}[t]{0.45\hsize}
    \centering
    \includegraphics[width=6cm]{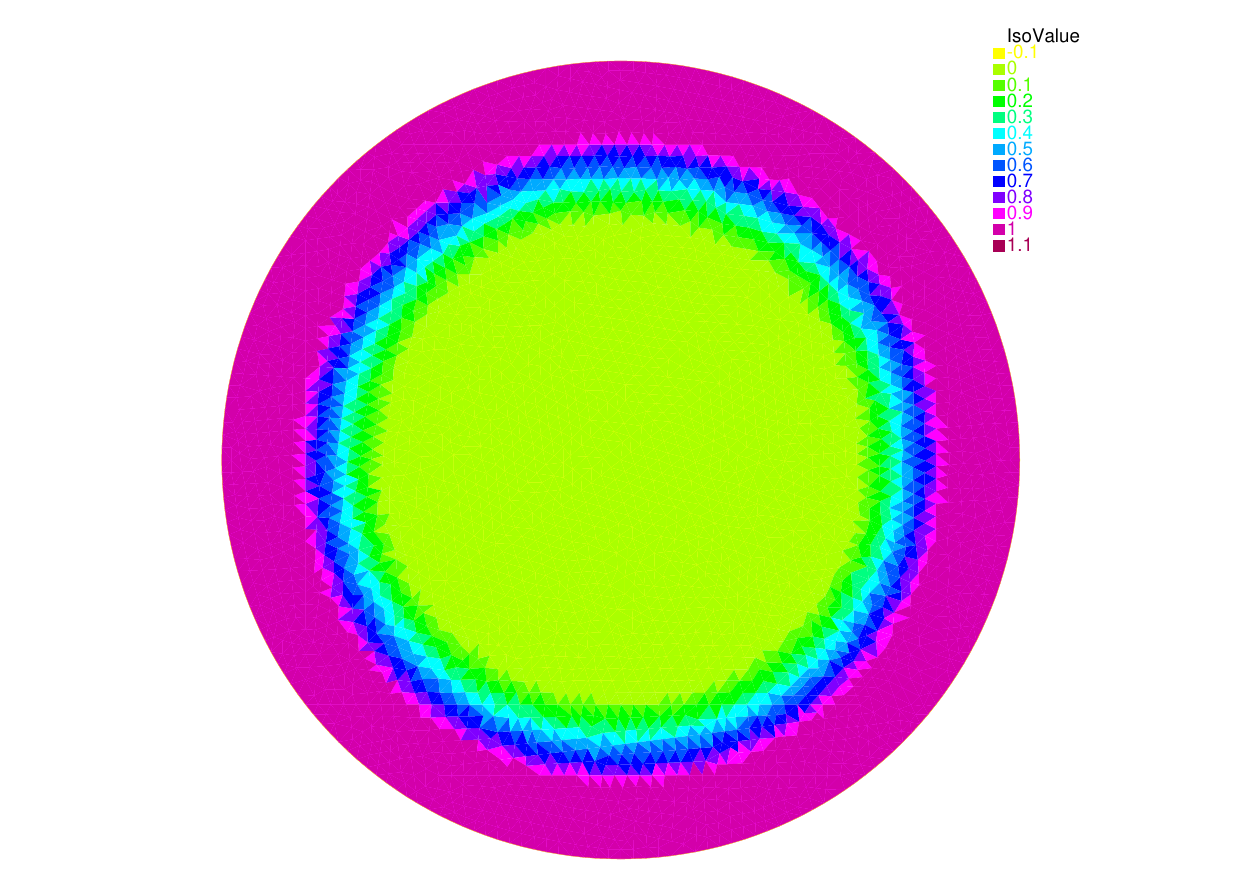}
    \subcaption{pseudo-concave max.}
  \end{minipage}
  &
  \hspace{-3mm}
  \begin{minipage}[t]{0.45\hsize}
    \centering
    \includegraphics[width=6cm]{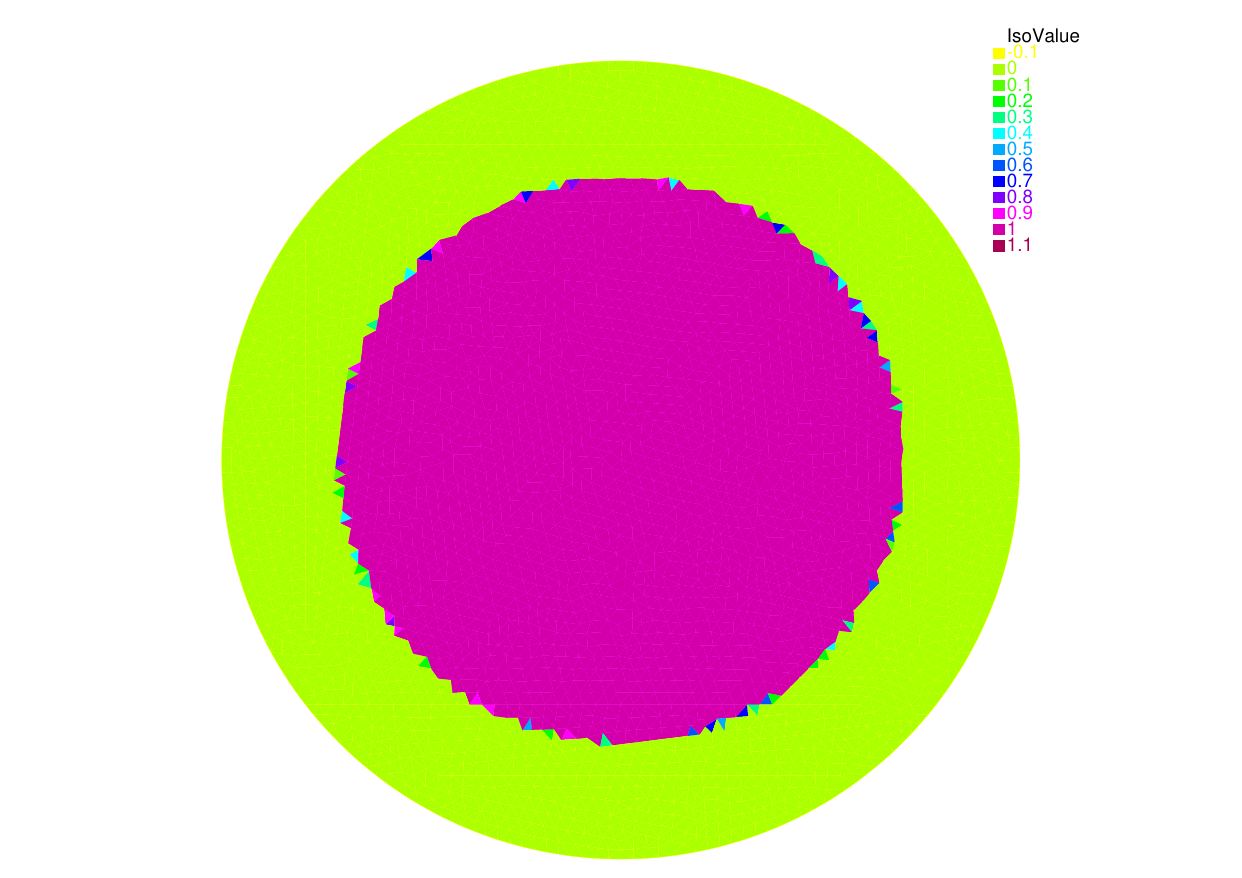}
    \subcaption{pseudo-concave min.}
  \end{minipage}
  \end{tabular}
  \caption{(a) A numerical solution to the pseudo-concave maximization problem \eqref{p_relaxed}, which is expected to be a global maximizer. (b) A numerical solution to the pseudo-concave minimization problem \eqref{p_den_rel_min}, which is almost a classical $0$-$1$ design. It is consistent with Proposition \ref{prp_ball}.}
  \label{fig}
\end{figure}

\section{Conclusion}

We showed that the first eigenvalue of a certain linear elliptic operator is a pseudo-concave function with respect to a density-like parameter. This is a generalization of a classical result by \cite{jouron78} to the case where the first eigenvalue is not necessarily simple (hence not necessarily differentiable). We also applied the result to optimal design problems (topology optimization problems) relaxed by the homogenization method to show that certain problems are pseudo-concave optimization problems. Due to pseudo-concavity, in maximization problems, every stationary point is a global maximizer. We numerically illustrate that the computed solutions are consistent with the theoretical global maximizers and classical $0$-$1$ minimizers in the conductivity setting. These problems can be used as benchmark problems to test heuristic topology optimization methods used in engineering.

More thorough studies on optimal design problems of elasticity are needed, due to both theoretical and algorithmic challenges. On the theoretical side, the potential non-existence of suboptimal (i.e., non-global) local optima in optimal design problems of elasticity may require the consideration of a broader class of functions than pseudo-concave functions such as invex or incave functions \cite{mishra08}, in which every stationary point is guaranteed to be globally optimal. On the algorithmic side, further research is needed into nonsmooth optimization methods in non-reflexive Banach spaces.

\begin{appendices}

\section{FreeFEM code}\label{apn}

All the numerical results are produced using the following code in FreeFEM without any additional packages. Set \texttt{c1}$=$\texttt{c2}$=1$ for Problem \eqref{p_den_rel_min} and change the sign of \texttt{stepsize*gradient} from \texttt{+} to \texttt{-} for minimization.\\

\begin{lstlisting}
verbosity = 0;
// Parameters
real stepsize = 0.1;
real frac = 0.5; // Volume fraction of material 2
real c1 = 0.5;
real c2 = 1;
real rho1 = 0.3;
real rho2 = 0.7;
int maxiter = 200;
real[int] levels = [-0.1,0,0.1,0.2,0.3,0.4,0.5,0.6,0.7,0.8,0.9,1,1.1];

// Mesh & FE spaces
border Gamma(t=0, 2*pi) {x=cos(t); y=sin(t);};
mesh Th = buildmesh(Gamma(200));
fespace Vh(Th, P2), Xh(Th, P0);
Vh u, v;
Xh ones=1, theta0, gradient; 
Xh theta=frac; // Uniform initial design

// Other settings
macro grad(u) [dx(u),dy(u)] //
real vol0 = int2d(Th)(1), mult, mult0, mult1, err, normu;
real[int] eval(1); // to store eigenvalues
Vh[int] evec(1); // to store eigenvectors

// Optimization
for (int i = 0; i < maxiter; ++i){
    // Objective value computation
    varf op (u, v)
        = int2d(Th)(((c2-c1)*theta+c1)*(grad(u)'*grad(v)))
        + on(1, u=0);
    varf b ([u], [v]) = int2d(Th)(((rho2-rho1)*theta+rho1)*u*v);
    matrix OP = op(Vh, Vh, solver=Crout, factorize=1);
    matrix B = b(Vh, Vh, solver=CG, eps=1e-20);
    int k = EigenValue(OP, B, sym=true, value=eval, vector=evec,
        tol=1e-10, maxit=0, ncv=0);
    // Gradient computation & Update
    u = evec[0];
    normu = sqrt(int2d(Th)(((rho2-rho1)*theta+rho1)*u*u));
    u = u/normu;
    gradient = (c2-c1)*grad(u)'*grad(u)-eval[0]*(rho2-rho1)*u*u;
    theta = theta + stepsize*gradient; // -/+ = minimize/maximize
    // Projection by the bisection method
    err = 1;
    real[int] t = theta[];
    mult0 = t.min-1;
    mult1 = t.max;
    theta0 = theta;
    while (abs(err) > 1e-7){
        mult = (mult0 + mult1)/2;
        theta = theta0 - mult;
        theta = min(theta,1);
        theta = max(theta,0);
        err = int2d(Th)(theta)/vol0-frac;
        if (err > 0){
            mult0 = mult;
        }
        else{
            mult1 = mult;
        }
    }
    // Display & Plot
    cout << "Iter. " << i << ", lambda1 = " << eval[0] << endl;
    plot(theta, cmm="Iter."+i, wait=false, viso=levels, fill=true, value=true);
}

// Plot & Save
plot(theta, wait=true, viso=levels, fill=true, value=true, ps="density.eps");
// // Plot eigenfunction
// plot(evec[0],wait=true, fill=true, value=true, dim=3);
\end{lstlisting}

\section*{Acknowledgement}

The author would like to thank Professor Yoshihiro Kanno for his feedback on this work. The work of the author is partially supported by JSPS KAKENHI JP25KJ0120. 

\section*{Declarations}

The author has no conflict of interest.

\section*{Data availability statement}

All the numerical results can be reproduced using the code in Appendix \ref{apn}.

\end{appendices}

\bibliography{ref}%

@article{achtziger07siam,
  title={Structural topology optimization with eigenvalues},
  author={Achtziger, Wolfgang and Ko{\v{c}}vara, Michal},
  journal={SIAM Journal on Optimization},
  volume={18},
  number={4},
  pages={1129--1164},
  year={2007},
  publisher={SIAM}
}

@article{allaire01,
  title={Eigenfrequency optimization in optimal design},
  author={Allaire, Gr{\'e}goire and Aubry, Sylvie and Jouve, Fran{\c{c}}ois},
  journal={Computer Methods in Applied Mechanics and Engineering},
  volume={190},
  number={28},
  pages={3565--3579},
  year={2001},
  publisher={Elsevier}
}

@book{allaire02,
  title={Shape Optimization by the Homogenization Method},
  author={Allaire, Gregoire},
  year={2002},
  publisher={Springer-Verlag},
  address={New York}
}

@article{allaire04,
  title={Structural optimization using sensitivity analysis and a level-set method},
  author={Allaire, Gr{\'e}goire and Jouve, Fran{\c{c}}ois and Toader, Anca-Maria},
  journal={Journal of Computational Physics},
  volume={194},
  number={1},
  pages={363--393},
  year={2004},
  publisher={Elsevier}
}

@book{allaire07,
  title={Numerical Analysis and Optimization: An Introduction to Mathematical Modelling and Numerical Simulation},
  author={Allaire, Gr{\'e}goire},
  year={2007},
  publisher={Oxford University Press},
  address={Oxford}
}

@article{allaire19,
  title={The homogenization method for topology optimization of structures: old and new},
  author={Allaire, Gr{\'e}goire and Cavallina, Lorenzo and Miyake, Nobuhito and Oka, Tomoyuki and Yachimura, Toshiaki},
  journal={Interdisciplinary Information Sciences},
  volume={25},
  number={2},
  pages={75--146},
  year={2019},
  publisher={The Editorial Committee of the Interdisciplinary Information Sciences}
}

@book{attouch14,
  title={Variational Analysis in Sobolev and BV Spaces: Applications to PDEs and Optimization},
  author={Attouch, Hedy and Buttazzo, Giuseppe and Michaille, G{\'e}rard},
  year={2014},
  publisher={SIAM},
  address={Philadelphia}
}

@book{bendsoe04,
  title={Topology Optimization: Theory, Methods, and Applications (2nd ed.)},
  author={Bends{\o}e, Martin Philip and Sigmund, Ole},
  year={2004},
  publisher={Springer-Verlag},
  address={Berlin}
}

@article{blank17,
  title={An extension of the projected gradient method to a {B}anach space setting with application in structural topology optimization},
  author={Blank, Luise and Rupprecht, Christoph},
  journal={SIAM Journal on Control and Optimization},
  volume={55},
  number={3},
  pages={1481--1499},
  year={2017},
  publisher={SIAM}
}

@article{casado15,
  title={Smoothness properties for the optimal mixture of two isotropic materials: the compliance and eigenvalue problems},
  author={Casado-D{\'\i}az, Juan},
  journal={SIAM Journal on Control and Optimization},
  volume={53},
  number={4},
  pages={2319--2349},
  year={2015},
  publisher={SIAM}
}

@article{casado17,
  title={A characterization result for the existence of a two-phase material minimizing the first eigenvalue},
  author={Casado-D{\'\i}az, Juan},
  journal={Annales de l'Institut Henri Poincar{\'e} C, Analyse Non Lin{\'e}aire},
  volume={34},
  number={5},
  pages={1215--1226},
  year={2017},
  publisher={Elsevier}
}

@article{casado22,
  title={The maximization of the first eigenvalue for a two-phase material},
  author={Casado-D{\'\i}az, Juan},
  journal={Applied Mathematics and Optimization},
  volume={86},
  number={1},
  pages={11},
  year={2022},
  publisher={Springer}
}

@book{casado22book,
  title={Optimal Design of Multi-Phase Materials: With a Cost Functional That Depends Nonlinearly on the Gradient},
  author={Casado-D{\'\i}az, Juan},
  year={2022},
  publisher={Springer Nature},
  address={Switzerland}
}

@book{clarke90,
  title={Optimization and Nonsmooth Analysis},
  author={Clarke, Frank H},
  year={1990},
  publisher = {SIAM},
  address={Philadelphia}
}

@article{cox90,
  title={Extremal eigenvalue problems for composite membranes, {I}--{II}},
  author={Cox, Steven J and McLaughlin, Joyce R},
  journal={Applied Mathematics and Optimization},
  volume={22},
  number={1},
  pages={153--167},
  year={1990},
  publisher={Springer}
}

@article{cox95,
  title={The generalized gradient at a multiple eigenvalue},
  author={Cox, Steven J},
  journal={Journal of Functional Analysis},
  volume={133},
  number={1},
  pages={30--40},
  year={1995},
  publisher={Elsevier}
}

@article{cox96,
  title={Extremal eigenvalue problems for two-phase conductors},
  author={Cox, Steven J and Lipton, Robert},
  journal={Archive for Rational Mechanics and Analysis},
  volume={136},
  number={2},
  pages={101--117},
  year={1996},
  publisher={Springer-Verlag}
}

@article{deaton14,
  title={A survey of structural and multidisciplinary continuum topology optimization: post 2000},
  author={Deaton, Joshua D and Grandhi, Ramana V},
  journal={Structural and Multidisciplinary Optimization},
  volume={49},
  number={1},
  pages={1--38},
  year={2014},
  publisher={Springer}
}

@book{evans10,
  title={Partial Differential Equations (2nd ed.)},
  author={Evans, Lawrence C},
  year={2022},
  publisher={American Mathematical Society},
  address={Providence}
}

@misc{freefem,
  author={{FreeFEM}},
  title={{FreeFEM - An open-source PDE Solver using the Finite Element Method}},
  note={{https://freefem.org (Accessed on March 3, 2025)}},
}

@article{groen20,
  title={De-homogenization of optimal multi-scale 3{D} topologies},
  author={Groen, Jeroen P and Stutz, Florian C and Aage, Niels and B{\ae}rentzen, Jakob A and Sigmund, Ole},
  journal={Computer Methods in Applied Mechanics and Engineering},
  volume={364},
  pages={112979},
  year={2020},
  publisher={Elsevier}
}

@article{hecht12,
  title={New development in FreeFem++},
  author={Hecht, Fr{\'e}d{\'e}ric},
  journal={Journal of Numerical Mathematics},
  volume={20},
  number={3-4},
  pages={251--266},
  year={2012},
  publisher={De Gruyter}
}

@book{henrot06,
  title={Extremum Problems for Eigenvalues of Elliptic Operators},
  author={Henrot, Antoine},
  year={2006},
  publisher={Birkh\"{a}user},
  address={Basel}
}

@article{jha11,
  title={Minimization of the principal eigenvalue under {N}eumann boundary conditions},
  author={Jha, Kanhaiya and Porru, Giovanni},
  journal={Numerical Functional Analysis and Optimization},
  volume={32},
  number={11},
  pages={1146--1165},
  year={2011},
  publisher={Taylor \& Francis}
}

@article{Jouron78,
  title={Sur un probl\`{e}me d'optimisation ou la contrainte porte sur la fr\'{e}quence fondamentale},
  author={Jouron, Claude},
  journal={RAIRO.~Analyse Num\'{e}rique},
  volume={12},
  number={4},
  pages={349--375},
  year={1978}
}

@article{kohn86,
  title={Optimal design and relaxation of variational problems, {I}--{III}},
  author={Kohn, Robert V and Strang, Gilbert},
  journal={Communications on Pure and Applied Mathematics},
  volume={39},
  number={1},
  pages={113--137},
  year={1986},
  publisher={Wiley Online Library}
}

@article{krein55,
  title={On certain problems on the maximum and minimum of characteristic values and on the {L}yapunov zones of stability},
  author={Krein, Mark Grigorʹevich},
  journal={American Mathematical Society Translations: Series 2},
  volume={1},
  pages={163--187},
  year={1955},
  publisher={American Mathematical Society}
}

@article{laurain14,
  title={Global minimizer of the ground state for two phase conductors in low contrast regime},
  author={Laurain, Antoine},
  journal={ESAIM: Control, Optimisation and Calculus of Variations},
  volume={20},
  number={2},
  pages={362--388},
  year={2014},
  publisher={EDP Sciences}
}

@article{lipton94,
  title={A saddle-point theorem with application to structural optimization},
  author={Lipton, R},
  journal={Journal of Optimization Theory and Applications},
  volume={81},
  number={3},
  pages={549--568},
  year={1994},
  publisher={Springer}
}

@article{matsue15,
  title={Numerical studies of the optimization of the first eigenvalue of the heat diffusion in inhomogeneous media},
  author={Matsue, Kaname and Naito, Hisashi},
  journal={Japan Journal of Industrial and Applied Mathematics},
  volume={32},
  pages={489--512},
  year={2015},
  publisher={Springer}
}

@article{mazari22,
  title={Shape optimization of a weighted two-phase Dirichlet eigenvalue},
  author={Mazari, Idriss and Nadin, Gr{\'e}goire and Privat, Yannick},
  journal={Archive for Rational Mechanics and Analysis},
  volume={243},
  pages={95--137},
  year={2022},
  publisher={Springer}
}

@book{mishra08,
  title={Invexity and Optimization},
  author={Mishra, Shashi K and Giorgi, Giorgio},
  year={2008},
  publisher={Springer Berlin},
  address={Heidelberg}
}

@article{morrey52,
  title={Quasi-convexity and the lower semicontinuity of multiple integrals},
  author={Morrey, Charles B},
  journal={Pacific Journal of Mathematics},
  volume={2},
  pages={25--53},
  year={1952}
}

@article{murat71,
  title={Un contre-exemple pour le probl\`{e}me du contr\^{o}le dans les coefficients},
  author={Murat, F},
  journal={Comptes Rendus de l'Acad\'{e}mie des Sciences A},
  volume={273},
  pages={708--711},
  year={1971}
}

@article{nishioka23coap,
  title={On a minimization problem of the maximum generalized eigenvalue: properties and algorithms},
  author={Nishioka, A and Toyoda, M and Tanaka, M and Kanno, Y},
  journal={Computational Optimization and Applications},
  volume={90},
  pages={303--336},
  year={2024}
}

@article{nishioka24svva,
  title={Variational analysis of unbounded and discontinuous generalized eigenvalue functions with application to topology optimization},
  author={Nishioka, A and Kanno, Y},
  journal={Set-Valued and Variational Analysis},
  volume={33},
  pages={22},
  year={2025}
}

@article{penot97,
  title={Generalized convexity of functions and generalized monotonicity of set-valued maps},
  author={Penot, J-P and Quang, P H},
  journal={Journal of Optimization Theory and Applications},
  volume={92},
  pages={343--356},
  year={1997},
  publisher={Springer}
}

@InProceedings{tartar75,
author="Tartar, Luc",
editor="Bensoussan, A.
and Lions, J. L.",
title="Probl\`{e}mes de contr\^{o}le des coefficients dans des \'{e}quations aux d\'{e}riv\'{e}es partielles",
booktitle="Control Theory, Numerical Methods and Computer Systems Modelling",
year="1975",
publisher="Springer Berlin Heidelberg",
address="Berlin, Heidelberg",
pages="420--426"
}

\end{document}